\numberwithin{equation}{section}
\title{Weak Exactness and Amalgamated Free Product of Von Neumann Algebras}
\author{Kai Toyosawa}
\email{}
\thanks{}
\newtheorem{thm}{Theorem}[section]
\newtheorem{cor}[thm]{Corollary}
\newtheorem{lem}[thm]{Lemma}
\theoremstyle{definition}
\newtheorem{defn/lem}[thm]{Definition/Lemma}
\newtheorem{rem}[thm]{Remark}
\newtheorem{exmp}[thm]{Example}
\newcommand{\id}{\operatorname{id}}
\newcommand{\supp}{\operatorname{supp}}
\newcommand{\ds}{{\sharp\kern-.5pt\sharp}}
\newcommand\restr[2]{\ensuremath{\left.#1\right|_{#2}}}
\DeclareMathOperator{\spn}{span}
\DeclareMathOperator{\nor}{nor}
\DeclareRobustCommand\frownotimes{\mathbin{\mathpalette\frown@otimes\relax}}
\newcommand{\frown@otimes}[2]{%
  \vbox{
    \ialign{##\cr
      \hidewidth$\m@th#1{}_\frown$\kern-\scriptspace\hidewidth\cr
      \noalign{\nointerlineskip\kern-1pt}
      $\m@th#1\otimes$\cr
    }%
  }%
}
\begin{document}

\maketitle
\begin{abstract}
  We show that the amalgamated free product of weakly exact von Neumann algebras is weakly exact. This is done by using a universal property of Toeplitz-Pimsner algebras and a locally convex topology on bimodules of von Neumann algebras, which is used to characterize weakly exact von Neumann algebras.

\end{abstract}


\section{Introduction}

Weakly exact von Neumann algebras were introduced by Kirchberg \cite{MR1403994} in parallel with exact C$^{*}$-algebras and have since became an effective tool for exhibiting structural results in von Neumann algebras. Ozawa showed in \cite{MR2370001} that for a countable discrete group $\Gamma$, the associated group von Neumann algebra $L\Gamma$ is weakly exact if and only if $\Gamma$ is an exact group. In the seminal paper \cite{MR2079600}, Ozawa proved that the group von Neumann algebra $L\Gamma$ of a hyperbolic group $\Gamma$ is solid, that is, for any diffuse von Neumann subalgebra $A$ in $L\Gamma$, the relative commutant $A' \cap L\Gamma$ is injective. As a consequence, every nonamenable subfactor of a free group factor is prime. Ozawa's proof is generalized in \cite[Theorem 15.1.5]{MR2391387} to the group von Neumann algebras associated to a large class of exact groups called biexact groups \cite[Definition 15.1.2]{MR2391387}, for which many other structural results are known, see for example \cite{MR2052608,MR3087388,MR3259044,MR3494487,MR4484235}.
Ding and Peterson further extended the notion of biexactness for general weakly exact von Neumann algebras in \cite{ding2023biexact}, and showed that a generalized solidity property holds for biexact von Neumann algebras.

It is well known that exactness of C$^{*}$-algebras is preserved under subalgebras, increasing unions, quotients (\cite{MR1322641} and \cite{MR1403994}), and amalgmated free products (\cite{MR2039095} and \cite{MR1880402}). The proofs for permanence properties of exactness in the C$^{*}$-algebra setting rarely adapt directly to weakly exact von Neumann algebras, in part due to the fact that the ultraweak topology is much weaker than the norm topology, often making it difficult to show the properties that hold for a norm-closed set also hold for its ultraweak-closure. Another obstacle comes from the fact that weak exactness only passes to von Neumann subalgebras with conditional expectation, and not to all subalgebras in general.

In \cite{MR3004955}, Isono extended the notion of weakly exact von Neumann algebras to dense C$^{*}$ subalgebras. He showed that a von Neumann algebra $M$ with a separable predual is weakly exact in the sense of Kirchberg if there exists an ultraweakly dense C$^{*}$-subalgebra $A$ that is weakly exact in $M$. This enabled him to prove that for von Neumann algebras with separable predual, weak exactness is preserved under tensor products (also shown in \cite[Corollary 14.2.5]{MR2391387}), crossed products with exact groups, and increasing unions. Using these tools, Isono and Houdayer later showed a spectral gap rigidity result inside crossed product von Neumann algebras arising from arbitrary actions of biexact discrete groups on amenable $\sigma$-finite von Neumann algebras \cite{MR3555359}. The question of whether taking free products preserves weak exactness was left open.

Ding, Kunnawalkam Elayavalli, and Peterson in \cite{MR4675043}, and Ding and Peterson in \cite{ding2023biexact} utilized a certain locally convex topology on bimodules of a von Neumann algebra in order to study the notions of proper proximality \cite{MR4258166} and biexactness in the setting of von Neumann algebras. If $M$ is a von Neumann algebra and $X\subset \mathbb{B}(\mathcal{H})$ is an operator $M$-bimodule, then there exists a locally convex topology on $X$, called the $M$-topolgy, that lies between the uniform and the ultraweak topologies. This topology was first introduced by Magajna in \cite{MR1616512} and \cite{MR1750836}, and later related to Ozawa's topology in \cite{MR2730894}. It is proved in \cite{ding2023biexact} that a von Neumann algebra $M\subset \mathbb{B}(\mathcal{H})$ is weakly exact if and only if the inclusion $M \subset\mathbb{B}(\mathcal{H})$ is $M$-nuclear, meaning that there exist nets of u.c.p.\ (unital completely positive) maps $\phi_{i}\colon M\to \mathbb{M}_{n(i)}(\mathbb{C})$ and $\psi_{i}\colon \mathbb{M}_{n(i)}(\mathbb{C}) \to \mathbb{B}(\mathcal{H})$ such that $\psi_{i}\circ \phi_{i}(x)\to x$ in the $M$-topology on $\mathbb{B}(\mathcal{H})$ for every $x\in M$. Adapting results from 
\cite{MR2211141} to this characterization of weak exactness, they showed that when $M_{1}$ and $M_{2}$ are weakly exact von Neumann algebras with nondegenerate normal states $\omega_{1}$ and $\omega_{2}$, respectively, then the free product von Neumann algebra $M = (M_{1},\omega_{1})* (M_{2},\omega_{2})$ is weakly exact.

In this paper, we generalize the above result and show that weak exactness is preserved under taking amalagamated free products of von Neumann algebras: 
\begin{thm} \label{thm: amalg free prod preserves wk exact}
  Let $M_{i}$, $i\in I$, be weakly exact $\sigma$-finite von Neumann algebras such that each $M_{i}$ contains a copy of a fixed von Neumann subalgebra $(B,\varphi)$ with a faithful normal state $\varphi$. Assume each $M_{i}$ admits a faithful normal conditional expectation $E_{i}\colon M_{i}\to B$. Then the amalgamted free product von Neumann algebra $\overline{*}_{B}(M_{i},E_{i})_{i\in I}$ is weakly exact.
\end{thm}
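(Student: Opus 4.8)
The plan is to verify the nuclearity criterion for weak exactness from \cite{ding2023biexact}: representing $M = \overline{*}_{B}(M_{i},E_{i})_{i\in I}$ on its standard Hilbert space $\cH = L^{2}(M)$, I would produce nets of u.c.p.\ maps $\phi_{j}\colon M \to \M_{n(j)}(\C)$ and $\psi_{j}\colon \M_{n(j)}(\C) \to \B(\cH)$ with $\psi_{j}\circ\phi_{j}(x) \to x$ in the $M$-topology for every $x \in M$. The starting point is the Fock-space model of the amalgamated free product: writing $\cH_{i}^{\circ} = L^{2}(M_{i}) \ominus L^{2}(B)$ for the centered part of each $M_{i}$, viewed as a $B$-$B$ correspondence via $E_{i}$, one forms the $B$-correspondence $\mathcal{E} = \bigoplus_{i\in I}\cH_{i}^{\circ}$, and $\cH$ is the reduced (alternating) Fock space sitting inside the full Fock space $\mathcal{F}(\mathcal{E}) = \bigoplus_{n\geq 0}\mathcal{E}^{\otimes_{B}n}$ on which the Toeplitz-Pimsner algebra $\mathcal{T}_{\mathcal{E}}$ acts. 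The images of the $M_{i}$ in $\B(\cH)$ generate $M$, and these generators can be expressed through creation operators on $\mathcal{F}(\mathcal{E})$ together with the left action of $B$, placing $M$ (or an ultraweakly dense C$^{*}$-subalgebra of it) within reach of the universal property of $\mathcal{T}_{\mathcal{E}}$.

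Next I would assemble the local approximations. Each $M_{i}$ is weakly exact, hence $M_{i}\subset \B(L^{2}(M_{i}))$ is $M_{i}$-nuclear, yielding u.c.p.\ maps $\phi_{i}\colon M_{i}\to \M_{n}(\C)$ and $\psi_{i}\colon \M_{n}(\C)\to \B(L^{2}(M_{i}))$ that factor the inclusion approximately in the $M_{i}$-topology; since $B$ carries the faithful normal conditional expectation $E_{i}$, weak exactness passes to $B$ and supplies a compatible approximation of the coefficient inclusion $B\subset \B(L^{2}(B))$. The key device is the universal property of the Toeplitz-Pimsner algebra: compatible u.c.p.\ data on the coefficient algebra $B$ and on the generating correspondence $\mathcal{E}$, assembled from the $\phi_{i},\psi_{i}$, determine u.c.p.\ maps on $\mathcal{T}_{\mathcal{E}}$, which I would then transport back to $M$. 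Truncating the Fock grading to word length $\leq N$ turns the infinite-dimensional correspondence pieces into finitely many tensor factors, and combining this truncation with the matrix-valued $\phi_{i}$ on each factor produces genuinely finite-dimensional approximants $\phi_{j},\psi_{j}$ for $M$.

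The hard part will be the $M$-topology convergence $\psi_{j}\circ\phi_{j}(x)\to x$, and the obstacle is twofold. First, one must show that the $M$-topology on $\B(\cH)$ is controlled by the $M_{i}$-topologies on the tensor factors, i.e.\ that the local convergence furnished by weak exactness of the $M_{i}$ assembles into convergence in the global $M$-topology on the free product; this compatibility, resting on the bimodule description of Magajna's topology \cite{MR1616512, MR1750836} and its relation to Ozawa's topology \cite{MR2730894}, is where the free-product structure must be used most carefully, since the infinitely many word-length components have to be controlled uniformly on $M$-bounded pieces rather than merely in norm. Second, one must account for the discrepancy between the full Fock space $\mathcal{F}(\mathcal{E})$ carrying $\mathcal{T}_{\mathcal{E}}$ and the reduced Fock space $\cH = L^{2}(M)$: I would handle this by a projection/compression argument, checking that restricting the Toeplitz-Pimsner approximants to the alternating subspace preserves both unital complete positivity and the $M$-topology estimates. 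Once these compatibility and compression steps are in place, letting the depth $N$ grow while refining the local approximations produces the required net witnessing $M$-nuclearity, and hence the weak exactness of $\overline{*}_{B}(M_{i},E_{i})_{i\in I}$.
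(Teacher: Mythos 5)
Your outline has the right ancestry (the Dykema--Shlyakhtenko / Brown--Ozawa route through a Fock space and a Toeplitz--Pimsner algebra, followed by a compression back to $L^{2}M$), but the step you defer as ``the hard part'' is exactly the step that cannot be carried out the way you describe, and the paper's entire contribution is a workaround for it. Two specific problems. First, the universal property of $\mathcal{T}(\mathcal{H})$ is a statement about \emph{representations} of the correspondence, i.e.\ pairs $(\pi,\theta)$ satisfying the covariance relations; there is no functoriality of the Toeplitz--Pimsner construction for u.c.p.\ data, so ``compatible u.c.p.\ maps on $B$ and on $\mathcal{E}$ determine u.c.p.\ maps on $\mathcal{T}_{\mathcal{E}}$'' is not an available tool. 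Second, truncating the Fock grading to word length $\leqslant N$ and tensoring the local matrix approximants does not produce $M$-topology convergence on $M$: there is no ultraweak-type topology on $\mathcal{T}(\mathcal{H})$ compatible with that of $M$, and the local $(M_{i}\subset\mathbb{B}(L^{2}M_{i}))$-approximations do not assemble into control of the seminorms $s^{\rho}_{\omega}$ on $\mathbb{B}(L^{2}M)$ across infinitely many word-length components. The introduction of the paper states this obstruction explicitly as the reason the C$^{*}$-algebra proofs do not adapt.

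The paper's resolution is structurally different at the key point. It works with ultraweakly dense C$^{*}$-subalgebras $A_{i}\subset M_{i}$ (ultimately $A_{i}=M_{i}$ via Isono's theorem), builds the correspondence $\mathcal{H}=\mathcal{H}^{\phi}_{A_{1}\oplus A_{2}}$ over $A_{1}\oplus A_{2}$ (not over $B$, as in your $\mathcal{E}=\bigoplus_{i}\mathcal{H}_{i}^{\circ}$), and never approximates elements of $\mathcal{T}(\mathcal{H})$ at all. Instead, Lemma \ref{lem: normal hom ext hom on toep} shows that a $*$-homomorphism $\mathcal{T}(\mathcal{H})\to B$ whose restriction to $A$ is $(A\subset M)$-nuclear extends to a weakly nuclear $*$-homomorphism $\pi\colon\mathcal{T}(\mathcal{H})\to B^{\sharp*}$ --- this uses only the representation-level universal property applied to the exterior tensor product $\mathcal{H}\otimes(B^{\sharp*})'$, plus the support-projection invariance of Lemma \ref{lem: p_nor invariant}. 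The compression to the reduced Fock space is then handled not by an estimate but by a multiplicative-domain argument: one checks $\Phi^{\sharp*}(\pi(p))=q_{\nor}$ and $\Phi^{\sharp*}(\pi(u))=q_{\nor}$, which forces $\Phi^{\sharp*}\circ\pi\circ\Psi=i_{\mathbb{B}(L^{2}M)}\circ\Phi\circ\Psi$ on all of $A$, and weak nuclearity of the composite gives $(A\subset M)$-nuclearity of the inclusion $A\subset\mathbb{B}(L^{2}M)$ by \cite[Lemma 4.4, Theorem 5.1]{ding2023biexact}. You would also need the reduction steps you omit: Isono's theorem to pass between the C$^{*}$- and von Neumann levels, induction for finite $I$, and the increasing-union permanence for general $I$.
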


The proof of the above theorem is inspired by \cite{MR1880402} and \cite[Section 4.8]{MR2391387}. In \cite{MR1880402}, Dykema and Shlyakhtenko showed that the reduced amalgamated free products of exact C$^{*}$-algebras are quotients of canonically associated Cuntz-Pimsner algebras, which are exact, so the reduced amalgamated free products themselves are exact as well. A similar proof was presented in \cite{MR2391387}, using Toeplitz-Pimsner algebras instead of Cuntz-Pimsner algebras. We will follow a similar line of reasoning for our proof. The major diffculty in proving Theorem \ref{thm: amalg free prod preserves wk exact} lies in the facth that the techniques of \cite{MR1880402} and \cite{MR2391387} cannot be used when the norm topology is replaced by the $M$-topology, as there is no natural ultraweak topology on the associated Toeplitz-Pimsner algebra that is compatible with the ultraweak topology on $M$. In section 2.4 we show, however, that a universal property of the Toeplitz-Pimsner algebra can be adapted to the setting of the $M$-topology (Lemma \ref{lem: normal hom ext hom on toep}), and we will use that to prove Theorem \ref{thm: amalg free prod preserves wk exact}.

Inspired by the HNN extensions of groups, Ueda introduced the HNN extension constructions for von Neumann algebras in \cite{MR2152505} and \cite{MR2546003}. By \cite[Proposition 3.1]{MR2546003}, the HNN extension of a $\sigma$-finite von Neumann algebra $M$ can be naturally identified with a corner of an amalgamted free product, which is weakly exact when the original von Neumann algebra $M$ is weakly exact. Therefore, we obtain as a corollary of Theorem \ref{thm: amalg free prod preserves wk exact} that weak exactness is preserved under HNN extensions:

\begin{cor} \label{cor: wk exact for HNN}
  Let $B\subset M$ be $\sigma$-finite von Neumann algebras and $\theta\colon B\to M$ a faithful normal $*$-homomorphism. Assume there exist faithful normal conditional expectations $E^{M}_{B}\colon M\to B$ and $E^{M}_{\theta(B)}\colon M\to \theta(B)$. If $M$ is weakly exact, then the HNN extension of $M$ by $\theta$ is also weakly exact.
\end{cor}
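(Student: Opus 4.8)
The plan is to deduce Corollary~\ref{cor: wk exact for HNN} from Theorem~\ref{thm: amalg free prod preserves wk exact} together with Ueda's realization of an HNN extension as a corner of an amalgamated free product. Write $\cM := \mathrm{HNN}(M, B, \theta)$. By \cite[Proposition 3.1]{MR2546003}, there is a projection $p$ lying in (a finite matrix amplification of) $B$ and an amalgamated free product von Neumann algebra $\cN = \overline{*}_{\tB}(N_j, E_j)_{j\in J}$ such that $\cM \cong p\cN p$, where the common subalgebra $\tB$ is a finite matrix amplification of $B$ and the free factors $N_j$ are built from $M$ and from matrix algebras over $B$ using the data of the two conditional expectations $E^M_B$ and $E^M_{\theta(B)}$. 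The strategy is then: (i) check that $\cN$ satisfies the hypotheses of Theorem~\ref{thm: amalg free prod preserves wk exact}; (ii) conclude that $\cN$ is weakly exact; and (iii) pass from $\cN$ to the corner $p\cN p \cong \cM$.

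For step (i), first note that $B$ is weakly exact: it is a von Neumann subalgebra of the weakly exact algebra $M$ admitting the faithful normal conditional expectation $E^M_B$, and weak exactness passes to subalgebras with a faithful normal conditional expectation, as recalled in the introduction. Since $M$ and $B$ are $\sigma$-finite, so are their finite matrix amplifications, and amplification by a finite matrix algebra preserves weak exactness; hence each free factor $N_j$ is weakly exact and $\sigma$-finite. The common subalgebra $\tB$ carries a faithful normal state (for instance a fixed faithful normal state $\varphi$ on $B$ tensored with the normalized trace on the matrix part, using that $B$ is $\sigma$-finite). Finally, each $N_j$ admits a faithful normal conditional expectation onto $\tB$: for the factors coming from $M$ these are assembled from $\mathrm{id}\otimes E^M_B$ and $\mathrm{id}\otimes E^M_{\theta(B)}$, and for the auxiliary matrix factor it is the canonical state-preserving expectation. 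Once these are matched to Ueda's explicit construction, the hypotheses of Theorem~\ref{thm: amalg free prod preserves wk exact} are met, and step (ii) gives that $\cN$ is weakly exact.

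For step (iii), I would use that weak exactness passes to corners: if $\cN \subset \B(\HH)$ is weakly exact and $p \in \cN$ is a projection, then $p\cN p \subset p\B(\HH)p = \B(p\HH)$ is again weakly exact. This can be checked directly from the $M$-nuclearity characterization of \cite{ding2023biexact}: compressing by $p$ the nets of u.c.p.\ maps $\phi_i \colon \cN \to \M_{n(i)}(\C)$ and $\psi_i \colon \M_{n(i)}(\C) \to \B(\HH)$ witnessing $\cN$-nuclearity yields u.c.p.\ maps witnessing $(p\cN p)$-nuclearity of $p\cN p \subset \B(p\HH)$, provided one verifies that convergence in the $\cN$-topology compresses to convergence in the $(p\cN p)$-topology. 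Applying this to $\cM \cong p\cN p$ finishes the argument. The main work, and the place where care is needed, lies in step (i)---matching Ueda's free factors and amalgam precisely to the hypotheses of Theorem~\ref{thm: amalg free prod preserves wk exact} and checking faithfulness and normality of the resulting expectations---and in the compression argument of step (iii); the weak exactness of $\cN$ itself is then immediate from Theorem~\ref{thm: amalg free prod preserves wk exact}.
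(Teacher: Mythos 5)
Your proposal is correct and follows essentially the same route as the paper: both invoke Ueda's \cite[Proposition 3.1]{MR2546003} to realize the HNN extension as a corner $p\widetilde{N}p$ of an amalgamated free product of $M\otimes \mathbb{M}_{2}(\mathbb{C})$ and $B\otimes \mathbb{M}_{2}(\mathbb{C})$ over $B\oplus B$ (a direct sum rather than an amplification, but this is immaterial), apply Theorem \ref{thm: amalg free prod preserves wk exact}, and conclude by passing weak exactness to the corner. Your write-up is in fact somewhat more careful than the paper's about verifying the hypotheses of the theorem and justifying the corner-compression step.
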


\textbf{Acknowledgements.} The author thanks Jesse Peterson for suggesting Lemma \ref{lem: p_nor invariant}, along with numerous valuable conversations throughout the project. The author also thanks Ishan Ishan and Srivatsav Kunnawalkam Elayavalli for their helpful comments.

\section{Preliminaries}

In this paper, when $\mathcal{K}$ is a Hilbert space, $\mathbb{B}(\mathcal{K})$ denotes the space of bounded linear operators on $\mathcal{K}$. When $A,B$ are C$^{*}$-algebras, we denote by $A\odot B$ their algebraic tensor product and $A\otimes B$ their minimal tensor product. 

\subsection{Amalgamated free product}

We briefly recall the construction of amalgamated free products of C$^{*}$-algebras and von Neumann algebras. We refer the reader to \cite{MR1217253} for more detailed discussions on C$^{*}$-algebras case, \cite{MR1198815} for tracial von Neumann algebras, and \cite{MR1738186} for $\sigma$-finite von Neumann algebras.

Let $A_{i}$, $i\in I$, be unital C$^{*}$-algebras such that each contains a copy of a fixed unital C$^{*}$-subalgebra $D$. We assume that for each $i\in I$, there exists a faithful conditional expectation $E_{i}\colon A_{i}\to D$. Let $\langle\,,\,\rangle_{i}$ be the $D$-valued inner product on $A_{i}$ given by $\langle a,b\rangle_{i} = E_{i}(a^{*}b)$ for $a,b\in A_{i}$, and denote by $L^{2}A_{i}$ the natural C$^{*}$-correspondence over $D$ obtained from $A_{i}$ by separation and completion. Let $\xi_{i} \in L^{2}A_{i}$ be the distinguished element arising from $1\in A_{i}$ and $L^{2}A_{i}^{o} = L^{2}A_{i}\ominus \xi_{i}D$ be the complementing $D$-submodule of $\xi_{i}D$ in $L^{2}A_{i}$. We define the free product Hilbert $D$-module $(\mathcal{H},\xi) = *_{D}(L^{2}A_{i},\xi_{i})$ by
\[\mathcal{F} = \xi D \oplus \bigoplus_{n\geqslant 1}\bigoplus_{i_{1}\neq\cdots \neq i_{n}}L^{2}A_{i_{1}}^{o}\otimes_{D}\cdots \otimes_{D} L^{2}A_{i_{n}}^{o},\]
where $\xi D$ is the trivial Hilbert $D$-module with distinguished element $\xi = 1_{D}$, and $\otimes_{D}$ denotes the interior tensor product of C$^{*}$-correspondences over $D$. For each $i\in I$, we let
\[\mathcal{F}(i) = \xi D \oplus \bigoplus_{n\geqslant 1}\bigoplus_{\substack{i_{1}\neq i,\\i_{1}\neq\cdots \neq i_{n}}}L^{2}A_{i_{1}}^{o}\otimes_{D}\cdots \otimes_{D} L^{2}A_{i_{n}}^{o},\]
and so we have a canonical isomorphism $\mathcal{F}\simeq L^{2}A_{i} \otimes_{D} \mathcal{F}(i)$ for each $i\in I$. Moreover, this isomorphism gives a $*$-representation $\lambda_{i}\colon A_{i}\to \mathbb{B}(\mathcal{F})$. We observe that $\restr{\lambda_{i}}{D}$ is the canonical left action of $D$ on the C$^{*}$-correspondence $\mathcal{F}$ over $D$, so $\restr{\lambda_{i}}{D} = \restr{\lambda_{j}}{D}$ for every $i,j\in I$. The \textit{(reduced) amalgamated free product C$^{*}$-algebra $(A,E) = *_{D}(A_{i}, E_{i})_{i\in I}$} is the C$^{*}$-subalgebra $A$ of $\mathbb{B}(\mathcal{F})$ generated by $\bigcup_{i\in I}\lambda_{i}(A_{i})$, together with the conditional expectation from $A$ onto $D$ given by $E(x) = \langle \xi,x\xi\rangle$.

Now suppose $M_{i}$, $i\in I$, are $\sigma$-finite von Neumann algebras having a fixed unital von Neumann subalgebra $(B,\varphi)$ with a faithful normal state $\varphi$ on $B$. Assume that $E_{i}\colon M_{i}\to B$ is a normal faithful conditional expectation and let $\varphi_{i}\ = \varphi\circ E_{i}$ for $i\in I$. Let $L^{2}B$ be the standard Hilbert space for $B$ and $\xi$ denote the implementing vector of $\varphi$. Similarly, let $L^{2}M_{i}$ be the standard Hilbert space for $M_{i}$ and $\xi_{i}$ denote the implementing vector of $\varphi\circ E_{i}$. We write $M_{i}^{o} = \ker E_{i}$ and $L^{2}M_{i}^{o} = L^{2}M_{i}\ominus L^{2}B$. We define the free product Hilbert space $\mathcal{F}$ by
\[\mathcal{F} = L^{2}B \oplus  \bigoplus_{n\geqslant 1}\bigoplus_{i_{1}\neq\cdots \neq i_{n}}L^{2}M_{i_{1}}^{o}\otimes_{B}\cdots \otimes_{B} L^{2}M_{i_{n}}^{o},\]
where $\otimes_{B}$ denotes Sauvageot’s relative tensor product \cite{MR0703809}. For each $i\in I$, we let
\[\mathcal{F}(i) = L^{2}B \oplus  \bigoplus_{n\geqslant 1}\bigoplus_{\substack{i_{1}\neq i,\\i_{1}\neq\cdots \neq i_{n}}}L^{2}M_{i_{1}}^{o}\otimes_{B}\cdots \otimes_{B} L^{2}M_{i_{n}}^{o},\]
and from the canonical isomorphism $\mathcal{F}\simeq L^{2}M_{i} \otimes_{B} \mathcal{F}(i)$ we obtain (abusing the notation) a $*$-representation $\lambda_{i}\colon A_{i}\to \mathbb{B}(\mathcal{F})$ such that $\restr{\lambda_{i}}{B}$ coincides with the canonical left action $\lambda$ of $B$ on $\mathcal{H}$ for every $i\in I$.

Let $M$ be the von Neumann algebra $\big(\bigcup_{i\in I}\lambda_{i}(M_{i})\big)''$ and $\widetilde{\varphi}$ be the faithful vector state on $M$ given by $\xi_{0}$. There exists a normal faithful conditional expectation $E\colon M\to \lambda(B)$ conditioned by $\widetilde{\varphi}$ by Takesaki's Theorem \cite{MR0303307} such that
\[E(x_{1}\cdots x_{n}) = 0\]
for $n\in \mathbb{N}$, $x_{k} \in M_{i_{k}}^{o}$ with $i_{1}\neq i_{2}\neq \cdots i_{n}$. The pair $(M,E)$ is called the \textit{amalgamated free product von Neumann algebra of $(M_{i},E_{i})_{i\in I}$ over $B$}, and is denoted by $(M,E)= \overline{*}_{B}(M_{i},E_{i})_{i\in I}$. The construction of $(M,E)$ is independent of the choice of $\varphi$. Furthermore, $L^{2}(M,\widetilde{\varphi})$ can be canonically identified with $\mathcal{F}$.

If each $M_{i}$ contains a unital C$^{*}$-algebra $A_{i}$ and a fixed unital C$^{*}$-subalgebra $D\subset A_{i}$ such that $E_{i}(A_{i}) = D$, then by the universal property of reduced amalgamted free products (see e.g., \cite[Theorem 4.7.2]{MR2391387}), the C$^{*}$-subalgebra of $(M,E)$ generated by $\bigcup_{i\in I}\lambda_{i}(A_{i})$ is canonically isomorphic to the (reduced) amalgamated free product C$^{*}$-algebra $(A, \restr{E}{A}) = *_{D}(A_{i},\restr{E_{i}}{A_{i}})_{i\in I}$, and the notations for $\lambda_{i}$ are actually compatible.

In the following sections, we will often omit writing conditional expectations $E_{i}$ in the notations for amalgamated free products and write $A = *_{D}(A_{i})_{i\in I}$ or $M = \overline{*}_{B}(M_{i})_{i\in I}$. In particular, when $I= \{1,2\}$, for brevity we will write $A = A_{1}*_{D}A_{2}$ for the (reduced) amalgamated free product of C$^{*}$-algebras $A_{1},A_{2}$ and $M = M_{1}\overline{*}_{B}M_{2}$ for the amalgamated free product of von Neumann algebras $M_{1},M_{2}$.

\subsection{A relative topology on C$^{*}$-bimodules and weak exactness}

In this section we collect some basic facts of a locally convex topology relative to von Neumann algebras on C$^{*}$-bimodules introduced by Magajna in \cite{MR1616512} and \cite{MR1750836}, and later adapted and generalized in \cite{MR4675043} and \cite{ding2023biexact}. This topology has been used to characterize weakly exact von Neumann algebras \cite[Theorem 5.1]{ding2023biexact}. We refer reader to \cite{MR4675043} and \cite{ding2023biexact} for details on this topology.

Throughout this section, we assume that $M$ is a von Neumann algebra and and $A$ is a unital, ultraweakly dense C$^{*}$-subalgebra of $M$. A \textit{(concrete) operator $A$-system} $X$ consists of a concrete embedding $X\subset \mathbb{B}(\mathcal{H})$ (here $\mathcal{H}$ is a Hilbert space) and a faithful non-degenerate representation $\pi\colon A\to \mathbb{B}(\mathcal{H})$ such that $X$ is a $\pi(A)$-bimodule. We call such embedding of $X\subset \mathbb{B}(\mathcal{H})$ together with $\pi$ \textit{a concrete realization of $X$ as an $A$-system}. We say the operator $A$-system $X$ is \textit{$(A\subset M)$-normal} if the concrete realization can be made so that $\pi$ extends to a normal representation of $M$. If $X$ is moreover a unital C$^{*}$-algebra, we will say that it is an \textit{operator $A$-C$^{*}$-algebra} or an \textit{$(A\subset M)$-normal} $A$-C$^{*}$-algebra if $X$ is so as an operator system.

We let $A^{\sharp}$ denote the subspace of $A^{*}$ consisting of linear functionals that can be extended to normal linear functionals on $M$. Given positive linear functionals $\omega,\rho \in A^{\sharp}$, we consider the seminorm on $X$ as in \cite[Section 3]{ding2023biexact} given by
\[s^{\rho}_{\omega}(x) = \inf\{\rho(a^{*}a)^{\frac{1}{2}}\left\Vert y \right\Vert\omega(b^{*}b)^{\frac{1}{2}} : x= a^{*}yb, a,b\in C_{n}(A), y\in \mathbb{M}_{n}(X), n\in\mathbb{N}\},\]
where $C_{n}(A)$ denotes the space of the $n\times 1$ column matrices with entries from $A$. We call the topology on $X$ induced by the seminorms $\{s^{\rho}_{\omega}:\omega,\rho \in (A^{\sharp})_{+}\}$ the $(A\subset M)$-topology on $X$ (or simply the \textit{$M$-topology} when $A=M$).

We denote by $X^{A\sharp A}$, or just by $X^{\sharp}$ if no confusion will arise, the space of linear functionals $\varphi \in X^{*}$ such that for any $x\in X$, the map $A\times A \ni (a,b)\mapsto \varphi(axb)$ extends to a separately ultraweakly continuous bilinear form on $M$. We call the $\sigma(X, X^{A\sharp A})$-topology the \textit{weak $(A\subset M)$-topology} (or simply the \textit{weak $M$-topology} when $A=M$). By \cite[Proposition 3.3]{ding2023biexact}, a functional $\varphi\in X^{*}$ is continuous in the $(A\subset M)$-topology if and only if $\varphi\in X^{A\sharp A}$.

We remark that every operator system $X$ can be regarded as an operator $\mathbb{C}$-system via the embedding $\mathbb{C} = \mathbb{C}1 \subset X$. In this case, the $\mathbb{C}$-toplogy on $X$ is the norm topology and the weak $\mathbb{C}$-topology is the weak topology $\sigma(X,X^{*})$.

We will need the following lemma adapted from \cite[Lemma 3.7]{ding2023biexact}. The proof is essentially identical to the proof for \cite[Lemma 3.7]{ding2023biexact} up to replacing the weak $M$-topologies by weak $(A\subset M)$-topologies.
\begin{lem} \label{lem: ext cont weak top}
	Let $M$ and $N$ be von Neumann algebras, $A\subset M$ and $B\subset N$ be unital ultraweakly dense C$^{*}$-algebras, and $E$ and $F$ be operator $A$ and $B$-systems, respectively. If $\phi\colon E\to F$ is a completely positive map such that the restriction of $\phi$ to $A$ defines a map from $A$ to $B$ that extends to an ultraweakly continuous map from $M$ to $N$, then $\phi$ is a continuous map from $E$ with the weak $(A\subset M)$-topology, to $F$ with the weak $(B\subset N)$-topology.
\end{lem}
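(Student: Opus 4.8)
The plan is to apply the standard duality criterion for continuity between weak topologies. Since $\phi$ is completely positive it is in particular linear, and a linear map is continuous from $\sigma(E, E^{A\sharp A})$ to $\sigma(F, F^{B\sharp B})$ precisely when its transpose carries the defining predual of the target into that of the source. Concretely, it suffices to show that $\psi \circ \phi \in E^{A\sharp A}$ for every $\psi \in F^{B\sharp B}$: once this holds, $\psi\circ\phi$ is by definition continuous for the weak $(A\subset M)$-topology, and since the weak $(B\subset N)$-topology is the initial topology induced by $F^{B\sharp B}$, this yields continuity of $\phi$. Thus the whole statement reduces to a single membership condition.

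So I would fix $\psi \in F^{B\sharp B}$ and an arbitrary $x \in E$, and aim to produce a separately ultraweakly continuous bilinear form on $M \times M$ restricting to $(a,b)\mapsto \psi(\phi(axb))$ on $A\times A$. Write $\theta = \restr{\phi}{A}\colon A\to B$ and let $\overline{\theta}\colon M\to N$ be its ultraweakly continuous extension, which exists by hypothesis. The structural fact driving the proof is that $\phi$ is $\theta$-bimodular, namely $\phi(axb) = \theta(a)\,\phi(x)\,\theta(b)$ for all $a,b\in A$ and $x\in E$. I would obtain this from complete positivity via the multiplicative domain: provided $\theta$ is a unital $\ast$-homomorphism, $A$ lies in the multiplicative domain of $\phi$, whence $\phi(ay)=\theta(a)\phi(y)$ and $\phi(yb)=\phi(y)\theta(b)$ for every $y\in E$.

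Granting $\theta$-bimodularity, put $y=\phi(x)\in F$. Because $\psi\in F^{B\sharp B}$, the bilinear form $(c,d)\mapsto \psi(cyd)$ on $B\times B$ extends to a separately ultraweakly continuous bilinear form $\beta_{y}$ on $N\times N$. For $a,b\in A$ we then have $\psi(\phi(axb)) = \psi(\theta(a)\,y\,\theta(b)) = \beta_{y}(\theta(a),\theta(b))$, so the map $(s,t)\mapsto \beta_{y}(\overline{\theta}(s),\overline{\theta}(t))$ on $M\times M$ is the required extension: it agrees with $(a,b)\mapsto\psi(\phi(axb))$ on $A\times A$, and each of its one-variable slices is the composition of the ultraweakly continuous map $\overline{\theta}$ with an ultraweakly continuous slice of $\beta_{y}$, hence is ultraweakly continuous. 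As $x\in E$ was arbitrary, $\psi\circ\phi\in E^{A\sharp A}$, which is what was needed.

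I expect the $\theta$-bimodularity identity to be the main obstacle; the reduction to the duality criterion and the verification that the composite form is separately ultraweakly continuous are essentially formal. The delicate point is to check that the hypotheses genuinely force $A$ into the multiplicative domain of $\phi$, i.e.\ that $\theta=\restr{\phi}{A}$ is multiplicative (this is automatic in the intended setting, where $\phi$ is a morphism of operator systems over the $\ast$-homomorphism $\theta$). By contrast, the extension $\overline{\theta}$ is used only as an ultraweakly continuous linear map and need not itself be a $\ast$-homomorphism, so the final composition argument is robust.
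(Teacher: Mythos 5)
Your reduction to the transpose condition (showing $\psi\circ\phi\in E^{A\sharp A}$ for every $\psi\in F^{B\sharp B}$) is the right first step, but the argument then hinges on the $\theta$-bimodularity identity $\phi(axb)=\theta(a)\phi(x)\theta(b)$, obtained by putting $A$ in the multiplicative domain of $\phi$. This is a genuine gap: the hypotheses only say that $\restr{\phi}{A}$ is a completely positive map from $A$ to $B$ admitting an ultraweakly continuous extension $M\to N$; nothing forces it to be multiplicative, and it is not "automatic in the intended setting." Indeed, the one place this lemma is used in the paper is for the u.c.p.\ map $\Phi(x)=\frac{1}{4}\sum_{i,j,k,l}V_{i,j}xV_{k,l}^{*}$, whose restriction to $A_{1}\oplus A_{2}$ is $a_{1}\oplus a_{2}\mapsto \frac{1}{2}\lambda_{1}(a_{1})+\frac{1}{2}\lambda_{2}(a_{2})$ --- a normal u.c.p.\ map that is manifestly not a $*$-homomorphism, so the multiplicative domain argument (and the bimodularity identity itself) fails exactly where the lemma is needed.

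The proof the paper defers to (Lemma 3.7 of Ding--Peterson) avoids bimodularity entirely and uses only complete positivity. After decomposing $\psi$ into positive parts, fix a positive $\psi\in F^{B\sharp B}$ and a decomposition $x=a^{*}yb$ with $a,b\in C_{n}(A)$, $y\in\mathbb{M}_{n}(E)$, $\|y\|\leqslant 1$. Then
\begin{equation*}
\begin{pmatrix} a^{*}a & x \\ x^{*} & b^{*}b\end{pmatrix}
=\begin{pmatrix} a & 0 \\ 0 & b\end{pmatrix}^{*}\begin{pmatrix} 1 & y \\ y^{*} & 1\end{pmatrix}\begin{pmatrix} a & 0 \\ 0 & b\end{pmatrix}\geqslant 0,
\end{equation*}
so applying $\phi^{(2)}$ and the Cauchy--Schwarz inequality for the positive functional $\psi$ on the resulting positive $2\times 2$ matrix over $F$ gives $|\psi(\phi(x))|^{2}\leqslant\psi(\phi(a^{*}a))\,\psi(\phi(b^{*}b))$. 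Setting $\rho=\psi\circ\restr{\phi}{A}$, which lies in $(A^{\sharp})_{+}$ because $\restr{\psi}{B}$ extends to a normal functional on $N$ and $\restr{\phi}{A}$ extends ultraweakly continuously to $M$, one gets $|\psi(\phi(x))|\leqslant s^{\rho}_{\rho}(x)$; by \cite[Proposition 3.3]{ding2023biexact} this means $\psi\circ\phi\in E^{A\sharp A}$. Note that here only $\phi(a^{*}a)=\restr{\phi}{A}(a^{*}a)$ is used --- never $\phi(axb)$ as a product --- which is why no multiplicativity is required. Your final composition step (pushing the separately normal form on $N\times N$ through $\overline{\theta}$) is fine, but it is only reached through the unjustified bimodularity, so the proof as written does not establish the lemma in the generality in which it is applied.
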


Now, suppose $B$ is a unital $(A\subset M)$-normal $A$-C$^{*}$-algebra. We let $p_{\nor}\in A^{**}\subset B^{**}$ denote the projection corresponding to the support of the identity representation $A\to M$. As explained in \cite[Section 3.1]{ding2023biexact}, we may identify $(p_{\nor}B^{**}p_{\nor})_{*} \simeq B^{\sharp}$ by considering the restriction map to $B$, so the dual map naturally gives $B^{\sharp *}$ a von Neumann algebra structure so that 
\[B^{\sharp *}\simeq p_{\nor}B^{**}p_{\nor}\]
as von Neumann algebras. Since $p_{\nor}$ commutes with $A\subset B^{**}$, the above isomorphism preserves the natural $A$-bimodule structures on $B^{\sharp *}$ and $p_{\nor}B^{**}p_{\nor}$, so we view $M = A^{\sharp *}$ as a von Neumann subalgebra of $B^{\sharp *}$. 

If $i_{B}\colon B\to B^{\sharp *}$ is the canonical inclusion map of $B$, then $i_{B}$ is an $A$-bimodular complete order isomorphism, but it is not a $*$-homomorphism in general because $p_{\nor}$ might not be central in $B^{**}$.

We now recall a definition of weak exactness of ultraweakly dense C$^{*}$-algebras, \cite[Definition 3.1.1]{MR3004955}, which generalizes the original definition of weakly exact von Neumann algebra in \cite{MR1403994}. If $M$ is a von Neumann algebra and $A\subset M$ is a (unital) ultraweakly dense C$^{*}$-algebra, then we say \textit{$A$ is weakly exact in $M$} if for any unital C$^{*}$-algebra $B$ with a closed two-sided ideal $J$ and any representation $\pi\colon A\otimes B\to \mathbb{B}(\mathcal{K})$ with $A\otimes J\subset \ker\pi$ such that $\restr{\pi}{A\otimes \mathbb{C}}$ extends to an ultraweakly continuous representation of $M$, the induced representation $\widetilde{\pi}\colon A\odot B/J\to \mathbb{B}(\mathcal{K})$ is min-continuous. 

In particular, when $A=M$, $M$ is weakly exact in $M$ if and only if $M$ is weakly exact in the sense of Kirchberg \cite{MR1403994}. Also, if $M = A^{**}$, then $A$ is weakly exact in $A^{**}$ if and only if $A$ is an exact C$^{*}$-algebra.

Recall that if $E$ is an operator system or a C$^{*}$-algebra and if $F$ is an operator $A$-system or $A$-C$^{*}$-algebra for a (unital) ultraweakly dense C$^{*}$-subalgebra $A$ of a von Neumann algebra $M$, then as defined in \cite[Section 4.1]{ding2023biexact}, we say a c.c.p.\ (completely contractive positive) map $\phi\colon E\to F$ is \textit{$(A\subset M)$-nuclear} if there exist nets of c.c.p.\ maps $\phi_{i}\colon E\to \mathbb{M}_{n(i)}(\mathbb{C})$ and $\psi\colon \mathbb{M}_{n(i)}(\mathbb{C})\to F$ such that $\psi_{i}\circ \phi_{i}(x)$ converges to $\phi(x)$ in the $(A\subset M)$-topology for every $x\in E$. By \cite[Theorem 5.1]{ding2023biexact}, one can characterize weak exactness as follows:  if $M\subset \mathbb{B}(\mathcal{H})$ is a von Neumann algebra and $A\subset M$ is a (unital) ultraweakly dense C$^{*}$-algebra, then $A$ is weakly exact in $M$ if and only if the inclusion map $A\subset \mathbb{B}(\mathcal{H})$ is $(A\subset M)$-nuclear.

We say that a Hilbert C$^{*}$-correspondence $\mathcal{H}$ over $A$ is \textit{$(A\subset M)$-normal} if for any $\xi,\eta\in \mathcal{H}$ and $f\in A^{\sharp}$, the map $A\ni a\mapsto f(\langle \xi,a\eta\rangle)$ extends to a normal linear functional on $M$. Our main example of an $(A\subset M)$-normal Hilbert $A$-correspondence in this paper is the following:
\begin{exmp} \label{exmp: correspondence}
  Let $M$ be a von Neumann algebra and $A\subset M$ be a (unital) ultraweakly dense C$^{*}$-subalgebra of $M$. Let $\phi\colon M\to M$ be a normal u.c.p.\ map such that $\phi(A) \subset A$. Let $\mathcal{H}^{\phi}_{A}$ denote the C$^{*}$-correspondence over $A$ obtained from the algebraic tensor product $A\odot A$ by separation and completion.  Equip $\mathcal{H}^{\phi}_{A}$ with the left and right $A$-actions $a\cdot (x\otimes y)\cdot b = ax\otimes yb$ for $a,b,x,y\in A$, and the $A$-valued semi-inner product 
  \[\langle a_{1}\otimes b_{1}, a_{2}\otimes b_{2}\rangle = b_{1}^{*}\phi(a_{1}^{*}a_{2})b_{2}, \quad a_{1},a_{2},b_{1},b_{2}\in A.\]
  It follows from the normality of $\phi$ that $\mathcal{H}^{\phi}_{A}$ is an $(A\subset M)$-normal C$^{*}$-correspondence over $A$.
\end{exmp}

\subsection{Toeplitz-Pimsner algebra}

We refer the readers to \cite[Section 4.6]{MR2391387} for following constructions and properties of Toeplitz-Pimsner algebras. Let $M$ be a von Neumann algebra and $A$ be an ultraweakly dense C$^{*}$-subalgebra of $M$. Let $(\mathcal{H}, \langle\,,\,\rangle_{A})$ be a C$^{*}$-correspondence over $A$, i.e., a right Hilbert $A$-module with a faithful nondegenerate $*$-representation $\pi\colon A\to \mathbb{B}(\mathcal{H})$ that gives a left action of $A$; we often omit $\pi$ and write $a\xi$ for $\pi(a)\xi$ if there is no confusion. Assume that $\mathcal{H}$ is full, meaning $\{\langle \xi,\eta\rangle_{\mathcal{H}}: \xi,\eta\in \mathcal{H}\}$ generates $A$ as a C$^{*}$-algebra. For $n\geqslant 1$, let $\mathcal{H}^{\otimes_{A}^n} = \mathcal{H}\otimes_{A} \cdots \otimes_{A}\mathcal{H}$ denote the $n$-fold interior tensor product of $\mathcal{H}$ over $A$, and let $\mathcal{F}(\mathcal{H}) = A\oplus \bigoplus_{n\geqslant 1}\mathcal{H}^{\otimes_{A}^n}$ be the full Fock space over $\mathcal{H}$. Naturally $\mathcal{F}$ is a C$^{*}$-correspondence over $A$. Let $\pi_{\mathcal{F}(\mathcal{H})}$ denote the $*$-representation of $A$ coming from the left action on $\mathcal{F}(\mathcal{H})$. For $\xi\in \mathcal{H}$, the creation operators $\tau(\xi)\in \mathbb{B}(\mathcal{F}(\mathcal{H}))$ are defined by
\begin{equation*}
  \begin{aligned}
    \tau(\xi)(\xi_{1}\otimes\cdots\otimes\xi_{n}) &=\xi\otimes \xi_{1}\otimes\cdots\otimes\xi_{n}\,,  \qquad\xi_{1},\cdots,\xi_{n}\in \mathcal{H},\\
    \tau(\xi)(a) &= \xi a\,,  \qquad \qquad \qquad\qquad a\in A,
  \end{aligned}
\end{equation*}
and they satisfy the relations
  \begin{align}
    a\tau(\xi)b &= \tau(a\xi b)\,,  \qquad\xi \in \mathcal{H}, a,b\in A, \label{eqn: rel for toep, 1}\\
    \tau(\xi)^{*}\tau(\eta) &= \langle \xi,\eta\rangle_{A}\,, \qquad \xi,\eta\in \mathcal{H} \label{eqn: rel for toep, 2}.
  \end{align}
We define the (augmented) Toeplitz-Pimsner algebra $\mathcal{T}(\mathcal{H})$ to be the C$^{*}$-subalgebra of $\mathbb{B}(\mathcal{F}(\mathcal{H}))$ generated by $\pi_{\mathcal{F}(\mathcal{H})}(A)$ and $\{\tau(\xi):\xi\in\mathcal{H}\}$. Note that there is an action $\gamma$ (called the gauge action) of the unit circle $\mathbb{T}=\{z\in \mathbb{C}:|z|=1\}$ on $\mathcal{T}(\mathcal{H})$ such that for every $z\in \mathbb{T}$,
\begin{equation*}
  \begin{aligned}
    \gamma_{z} a&= a\,,  \qquad\quad\,\,\, a\in A,\\
    \gamma_{z}(\tau(\xi))&= z\tau(\xi)\,, \qquad \xi\in \mathcal{H}.
  \end{aligned}
\end{equation*}
A representation of $\mathcal{H}$ on a C$^{*}$-algebra $B$ is a pair $(\pi,\theta)$, with $\pi\colon A\to B$ a $*$-homomorphism and $\theta\colon \mathcal{H}\to B$ a linear map such that
  \begin{align}
    \pi(a)\theta(\xi)\pi(b) &= \theta(a\xi b)\,,  \qquad\quad\  \xi \in \mathcal{H}, a,b\in A,   \label{eqn: defn of rep of C* corresp, 1}\\
    \theta(\xi)^{*}\theta(\eta) &= \pi(\langle \xi,\eta\rangle_{A})\,, \qquad\xi,\eta\in \mathcal{H}. \label{eqn: defn of rep of C* corresp, 2}
  \end{align}

We recall that the canonical representation $(\pi_{\mathcal{F}(\mathcal{H})},\tau)$ of $\mathcal{H}$ on $\mathcal{T}(\mathcal{H})$ is universal in the sense that for any other representation $(\pi,\theta)$ of $\mathcal{H}$ there is a continuous surjective $*$-homomorphism from $\mathcal{T}(\mathcal{H})$ onto the $C^{*}(\pi(A),\theta(\mathcal{H}))$, sending $\pi_{\mathcal{F}(\mathcal{H})}(a)$ to $\pi(a)$ and $\tau(\xi)$ to $\theta(\xi)$.

We also recall the gauge-invariant uniqueness theorem for Toeplitz-Pimsner algebras (see e.g., \cite[Theorem 4.6.18]{MR2391387} or \cite{MR1986889}), which states that if $(\pi,\theta)$ is a representation of $\mathcal{H}$ such that $\pi$ is faithful, $(\pi,\theta)$ admits a gauge action, and $\pi(A)\cap \overline{\spn}\{\tau(\xi)\tau(\eta)^{*}:\xi,\eta\in\mathcal{H}\} = \{0\}$, then the representation $(\pi,\theta)$ is universal. In particular, the C$^{*}$-algebra generated by $\pi(A)$ and $\tau(\mathcal{H})$ is canonically isomorphic to $\mathcal{T}(\mathcal{H})$.

In general, even if $A$ is an ultraweakly dense C$^{*}$-subalgebra of $M$, the associated Toeplitz-Pimsner algebra $\mathcal{T}(\mathcal{H})$ needs not be an $(A\subset M)$-normal $A$-C$^{*}$-algebra. However, the following lemma states that whenever the C$^{*}$-correspondence $\mathcal{H}$ over $A$ is $(A\subset M)$-normal, then the image subspace of the support projection $p_{\nor}$ of the identity representation $A\to M$ is invariant under $\tau(\xi)\in\mathcal{T}(\mathcal{H})$ for all $\xi \in \mathcal{H}$.
\begin{lem} \label{lem: p_nor invariant}
  Let $M$ be a von Neumann algebra and $A$ be a unital ultraweakly dense C$^{*}$-subalgebra of $M$. Suppose $\mathcal{H}$ is an $(A\subset M)$-normal $C^{*}$-correspondence over $A$, and $B$ is a unital $(A\subset M)$-normal C$^{*}$-algebra. Let $p_{\nor} \in B^{**}$ denote the projection corresponding to the support of the identity representation $A\to M$.

  If $\pi\colon \mathcal{T}(\mathcal{H})\to B$ is a $*$-homomorphism that restricts to the canonical identification map on $A$, then for any $\xi \in\mathcal{H}$, we have
  \[p_{\nor}\pi(\tau(\xi))p_{\nor} = \pi(\tau(\xi))p_{\nor} \qquad \text{ in }B^{**}.\]
\end{lem}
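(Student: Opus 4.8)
The plan is to prove the equivalent identity $w := (1-p_{\nor})\pi(\tau(\xi))p_{\nor} = 0$ in $B^{**}$. Observe first that $w^{*}w = p_{\nor}\pi(\tau(\xi))^{*}(1-p_{\nor})\pi(\tau(\xi))p_{\nor}$ is a positive element of the corner $p_{\nor}B^{**}p_{\nor}$. Since the predual of this corner is identified with $B^{\sharp}$ and positive normal functionals separate positive elements of a von Neumann algebra, it suffices to show that $\tilde{f}(w^{*}w) = 0$ for every positive $f\in B^{\sharp}$, where $\tilde{f}$ denotes the normal extension of $f$ to $B^{**}$. This is the reduction I would carry out first, as it replaces the (non-central) projection $p_{\nor}$ by a scalar test against a well-understood family of functionals.

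Next I would fix such an $f$, normalized to a state, and pass to its GNS triple $(\mathcal{K}_{f},\pi_{f},\Omega_{f})$, extending $\pi_{f}$ to a normal representation of $B^{**}$. Because $f\in B^{\sharp}$, its normal extension is supported under $p_{\nor}$, so $\pi_{f}(p_{\nor})\Omega_{f}=\Omega_{f}$. Writing $\psi = \pi_{f}(\pi(\tau(\xi)))\Omega_{f}$ and using this identity, a direct expansion gives
\[
\tilde{f}(w^{*}w) = \big\langle \psi,\,(1-\pi_{f}(p_{\nor}))\psi\big\rangle = \big\Vert (1-\pi_{f}(p_{\nor}))\psi\big\Vert^{2}.
\]
Thus the statement reduces to showing that $\psi$ lies in the range of $\pi_{f}(p_{\nor})$, equivalently $\Vert \pi_{f}(p_{\nor})\psi\Vert = \Vert\psi\Vert$. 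Here relation \eqref{eqn: rel for toep, 2}, pushed through $\pi$ (which restricts to the identity on $A$), already yields $\Vert\psi\Vert^{2} = f(\langle\xi,\xi\rangle_{A})$.

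To compute $\Vert\pi_{f}(p_{\nor})\psi\Vert$ I would invoke Kaplansky's density theorem to choose a net $(a_{\lambda})$ of self-adjoint contractions in $A$ with $a_{\lambda}\to p_{\nor}$ in the $\sigma$-strong$^{*}$ topology of $A^{**}$. Normality of $\pi_{f}$ then forces $\pi_{f}(a_{\lambda})\psi\to\pi_{f}(p_{\nor})\psi$ in norm, so $\Vert\pi_{f}(p_{\nor})\psi\Vert^{2} = \lim_{\lambda}\langle\psi,\pi_{f}(a_{\lambda}^{2})\psi\rangle$. Applying \eqref{eqn: rel for toep, 1} and \eqref{eqn: rel for toep, 2} through $\pi$ simplifies $\pi(\tau(\xi))^{*}a_{\lambda}^{2}\pi(\tau(\xi)) = \langle\xi,a_{\lambda}^{2}\xi\rangle_{A}\in A$, whence $\langle\psi,\pi_{f}(a_{\lambda}^{2})\psi\rangle = f(\langle\xi,a_{\lambda}^{2}\xi\rangle_{A})$. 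At this point the $(A\subset M)$-normality of $\mathcal{H}$ enters decisively: setting $g = f|_{A}\in A^{\sharp}$, the functional $a\mapsto g(\langle\xi,a\xi\rangle_{A})$ extends to a normal functional $\Phi$ on $M$. Since $a_{\lambda}^{2}\to p_{\nor}$ $\sigma$-weakly in $A^{**}$ and the canonical quotient $A^{**}\to M$ is normal and sends $p_{\nor}$ to $1$, I obtain $\lim_{\lambda}f(\langle\xi,a_{\lambda}^{2}\xi\rangle_{A}) = \Phi(1) = f(\langle\xi,\xi\rangle_{A}) = \Vert\psi\Vert^{2}$. Hence $\Vert\pi_{f}(p_{\nor})\psi\Vert = \Vert\psi\Vert$, so $\tilde{f}(w^{*}w)=0$, and running over all positive $f\in B^{\sharp}$ gives $w^{*}w=0$, i.e. $w=0$.

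The main obstacle, and the reason the argument needs the normality hypothesis on $\mathcal{H}$, is that $p_{\nor}$ belongs only to $A^{**}$ and not to $A$, so no single relation of $\mathcal{T}(\mathcal{H})$ detects it directly; everything must be recovered by approximating $p_{\nor}$ with a bounded net from $A$ and controlling the limit. The delicate point is ensuring that no mass escapes to the singular part of $A^{**}$ when passing to the limit, and it is precisely the $(A\subset M)$-normality of $\mathcal{H}$ — guaranteeing that $a\mapsto f(\langle\xi,a\xi\rangle_{A})$ is $\sigma$-weakly continuous — that forces $\lim_{\lambda}f(\langle\xi,a_{\lambda}^{2}\xi\rangle_{A}) = f(\langle\xi,\xi\rangle_{A})$ and closes the estimate. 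I would expect verifying the clean algebraic collapse $\pi(\tau(\xi))^{*}a_{\lambda}^{2}\pi(\tau(\xi)) = \langle\xi,a_{\lambda}^{2}\xi\rangle_{A}$ and the topological bookkeeping around the two different convergences (norm convergence of $\pi_{f}(a_{\lambda})\psi$ versus $\sigma$-weak convergence of $a_{\lambda}^{2}$) to be the parts requiring the most care.
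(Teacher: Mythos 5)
Your proposal is correct: the reduction to $\tilde f(w^{*}w)=0$ for positive $f\in B^{\sharp}$ is legitimate (positive normal functionals separate positive elements of the corner $p_{\nor}B^{**}p_{\nor}\simeq (B^{\sharp})^{*}$), the GNS computation $\tilde f(w^{*}w)=\Vert(1-\pi_{f}(p_{\nor}))\psi\Vert^{2}$ is right, and the Kaplansky-density limit is closed precisely by the $(A\subset M)$-normality of $\mathcal{H}$ applied to $g=f|_{A}\in A^{\sharp}$. The underlying mechanism is the same as the paper's --- both proofs hinge on the collapse $\pi(\tau(\xi))^{*}a\,\pi(\tau(\xi))=\langle\xi,a\xi\rangle_{A}$ together with normality of $a\mapsto f(\langle\xi,a\xi\rangle_{A})$ --- but you take a much longer route around the projection $p_{\nor}$. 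The paper fixes a faithful representation $B^{**}\subset\mathbb{B}(\mathcal{K})$ and invokes the defining property of the support projection directly: $v\in p_{\nor}\mathcal{K}$ if and only if the vector state $a\mapsto\langle v,av\rangle$ on $A$ extends normally to $M$; since $\langle \pi(\tau(\xi))v,\,a\,\pi(\tau(\xi))v\rangle=\langle v,\langle\xi,a\xi\rangle_{A}v\rangle$, the vector $\pi(\tau(\xi))v$ again lies in $p_{\nor}\mathcal{K}$, and the lemma follows in three lines. Your argument rederives this invariance by hand --- testing against $B^{\sharp}$, passing to GNS, and approximating $p_{\nor}$ by self-adjoint contractions from $A$. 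What this buys is self-containedness (you never quote the vector-state characterization of the range of $p_{\nor}$), at the cost of the topological bookkeeping you flag yourself; if you are willing to use that characterization, your entire second and third paragraphs compress to the paper's one-line observation.
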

\begin{proof}
  We fix a faithful $*$-representation $B^{**}\subset \mathbb{B}(\mathcal{K})$ for some Hilbert space $\mathcal{K}$. If $\xi\in \mathcal{H}$, then for any vector $v \in p_{\nor}\mathcal{K}$, the map
  \[A\ni a \mapsto \langle \pi(\tau(\xi))v, a\pi(\tau(\xi))v \rangle = \langle v, \pi(\tau(\xi))^{*}a\pi(\tau(\xi)) v\rangle = \langle v,\langle \xi,a\xi\rangle_{A} v\rangle\]
  extends to a normal functional on $M$ since $\mathcal{H}$ is $(A\subset M)$-normal. Hence $\pi(\tau(\xi))v \in p_{\nor}\mathcal{K}$ whenever $v\in p_{\nor}\mathcal{K}$.
\end{proof}

We recall that if $\mathcal{H}$ is a C$^{*}$-correspondence over $A$ and $B$ is a C$^{*}$-algebra, then the exterior product of $\mathcal{H}$ and $B$, denoted as $\mathcal{H}\otimes B$, is the C$^{*}$-correspondence over $A\otimes B$ obtained by separation and completion of the algebraic tensor product $\mathcal{H}\otimes B$ with respect to the $A\otimes B$-valued semi-inner product
\[\langle \xi_{1}\otimes b_{1}, \xi_{2}\otimes b_{2}\rangle_{A\otimes B} = \langle \xi_{1},\xi_{2}\rangle_{A} \otimes b_{1}^{*}b_{2}\in A\otimes B\,, \qquad \xi_{1},\xi_{2}\in \mathcal{H}, \, b_{1},b_{2}\in B.\]
We have a natural action of $\mathbb{B}(\mathcal{H})\otimes B$ on $\mathcal{H}\otimes B$ is given by
\[(x\otimes b)(\xi \otimes b') = x\xi\otimes bb'\,, \qquad x\in \mathbb{B}(\mathcal{H}), \xi \in \mathcal{H}\,, b,b'\in B.\]

\begin{lem} \label{lem: normal hom ext hom on toep}
  Suppose $M$ is a von Neumann algebra and $A$ is a unital, ultraweakly dense C$^{*}$-subalgebra of $M$. Let $\mathcal{H}$ be a C$^{*}$-correspondence over $A$ and let $B$ be a unital $(A\subset M)$-normal $A$-C$^{*}$-algebra. 

  If $\phi\colon \mathcal{T}(\mathcal{H})\to B$ is a $*$-homomorphism whose restriction to $A$, $\restr{\phi}{A}\colon A\to B$, is the canonical inclusion and is $(A\subset M)$-nuclear, then there exists a weakly nuclear $*$-homomorphism $\pi\colon \mathcal{T}(\mathcal{H})\to B^{\sharp *}= (B^{A\sharp A})^{*}$ such that $\pi(x) = (i_{B}\circ \phi)(x)$ for $x\in A\cup \tau(\mathcal{H})\cup \tau(\mathcal{H})^{*}$.
\end{lem}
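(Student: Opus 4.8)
The plan is to produce $\pi$ from the universal property of $\mathcal{T}(\mathcal{H})$ and then to argue weak nuclearity separately, the latter being where the real work lies. Throughout I use the identifications $B^{\sharp *}\simeq p_{\nor}B^{**}p_{\nor}$ and $M = A^{\sharp *}\subset B^{\sharp *}$, under which $i_B(b) = p_{\nor}bp_{\nor}$ is an $A$-bimodular complete order isomorphism onto its image. To build $\pi$, I would define a representation $(\rho,\vartheta)$ of $\mathcal{H}$ on $B^{\sharp *}$ by taking $\rho = \restr{i_B}{A}\colon A\to M\subset B^{\sharp *}$, which is a genuine $*$-homomorphism because $p_{\nor}$ commutes with $A$, and $\vartheta(\xi) = i_B(\phi(\tau(\xi)))$. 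Relation \eqref{eqn: defn of rep of C* corresp, 1} is then immediate from the $A$-bimodularity of $i_B$, the fact that $\phi$ is a $*$-homomorphism restricting to the inclusion on $A$, and \eqref{eqn: rel for toep, 1}: indeed $\rho(a)\vartheta(\xi)\rho(b) = i_B(a\phi(\tau(\xi))b) = i_B(\phi(\tau(a\xi b))) = \vartheta(a\xi b)$. The subtle relation is \eqref{eqn: defn of rep of C* corresp, 2}, and here Lemma \ref{lem: p_nor invariant} is exactly what is needed: it gives $i_B(\phi(\tau(\xi))) = \phi(\tau(\xi))p_{\nor}$ in $B^{**}$, so that $\vartheta(\xi)^*\vartheta(\eta) = p_{\nor}\phi(\tau(\xi))^*\phi(\tau(\eta))p_{\nor} = p_{\nor}\phi(\langle\xi,\eta\rangle_{A})p_{\nor} = \langle\xi,\eta\rangle_{A}\,p_{\nor} = \rho(\langle\xi,\eta\rangle_{A})$, the intermediate projection being absorbed precisely by the invariance in Lemma \ref{lem: p_nor invariant} and by $\phi(\langle\xi,\eta\rangle_{A}) = \langle\xi,\eta\rangle_{A}\in A$ commuting with $p_{\nor}$. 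The universal property then yields a $*$-homomorphism $\pi\colon\mathcal{T}(\mathcal{H})\to C^{*}(\rho(A),\vartheta(\mathcal{H}))\subset B^{\sharp *}$ with $\pi(a) = \rho(a)$ and $\pi(\tau(\xi)) = \vartheta(\xi)$; since $i_B$ is self-adjoint, $\pi$ agrees with $i_B\circ\phi$ on $\tau(\mathcal{H})^{*}$ as well, giving the asserted values on $A\cup\tau(\mathcal{H})\cup\tau(\mathcal{H})^{*}$.

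Next I would establish weak nuclearity at the level of $A$. The restriction $\restr{\pi}{A}$ is the inclusion $A\hookrightarrow M\subset B^{\sharp *}$. Let $\alpha_i\colon A\to\mathbb{M}_{n(i)}$ and $\beta_i\colon\mathbb{M}_{n(i)}\to B$ be the c.c.p.\ maps witnessing the $(A\subset M)$-nuclearity of $\restr{\phi}{A}$, so that $\beta_i\circ\alpha_i(a)\to a$ in the $(A\subset M)$-topology on $B$. Because $B^{\sharp} = B^{A\sharp A}$ is the predual of $B^{\sharp *}$, the map $i_B$ carries the weak $(A\subset M)$-topology on $B$ to the ultraweak topology on $i_B(B)$; since the $(A\subset M)$-topology refines the weak one, the matrix factorizations $(i_B\circ\beta_i)\circ\alpha_i$ converge to $\restr{\pi}{A}$ point-ultraweakly, so $\restr{\pi}{A}$ is weakly nuclear.

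The main obstacle is to promote this to weak nuclearity of $\pi$ on all of $\mathcal{T}(\mathcal{H})$. Here I would adapt the argument of \cite[Section 4.6]{MR2391387} for exactness of Toeplitz--Pimsner algebras: compress by the projections $P_k$ of $\mathcal{F}(\mathcal{H}) = A\oplus\bigoplus_{n\geq1}\mathcal{H}^{\otimes_{A}^{n}}$ onto the first $k$ tensor levels to obtain c.c.p.\ maps converging to the identity whose range has coefficients in the compact operators $\mathbb{K}(\mathcal{H}^{\otimes_{A}^{m}},\mathcal{H}^{\otimes_{A}^{n}})$, transfer the approximations $\alpha_i,\beta_i$ from $A$ to these coefficient spaces (organized through the exterior products $\mathcal{H}\otimes\mathbb{M}_{n(i)}$ and the action of $\mathbb{B}(\mathcal{H})\otimes\mathbb{M}_{n(i)}$), and reassemble the finite-dimensional data via the creation operators, composing with $i_B$ to land in $B^{\sharp *}$. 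The difficulty, and the reason the norm-topology proofs of \cite{MR1880402} and \cite{MR2391387} do not transcribe directly, is that there is no ultraweak topology on $\mathcal{T}(\mathcal{H})$ compatible with $M$, so every convergence statement in the reassembly must be made in the weak $(A\subset M)$-topology and transported to $B^{\sharp *}$ through $i_B$; verifying that the reassembled maps remain c.c.p.\ and converge point-ultraweakly to $\pi$ on words in the creation operators, rather than merely on $A$, is the step I expect to demand the most care, and it is where Lemma \ref{lem: ext cont weak top} and the predual identification $B^{\sharp} = (B^{\sharp *})_{*}$ become essential.
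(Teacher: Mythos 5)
Your construction of $\pi$ via the universal property is sound: the pair $(\rho,\vartheta)$ with $\rho=\restr{i_{B}}{A}$ and $\vartheta(\xi)=i_{B}(\phi(\tau(\xi)))$ does satisfy relations \eqref{eqn: defn of rep of C* corresp, 1} and \eqref{eqn: defn of rep of C* corresp, 2}, with Lemma \ref{lem: p_nor invariant} absorbing the middle $p_{\nor}$ exactly as you say, and the verification that $\restr{\pi}{A}$ is weakly nuclear is also fine. The genuine gap is the weak nuclearity of $\pi$ on all of $\mathcal{T}(\mathcal{H})$, which you correctly identify as the main obstacle but then only sketch. The plan you outline --- compressing to finitely many tensor levels of the Fock space, transferring the finite-dimensional approximations of $A$ to the coefficient spaces, and reassembling via creation operators --- is precisely the norm-topology argument of \cite[Section 4.6]{MR2391387} that the introduction of this paper singles out as \emph{not} transcribing to the present setting: the approximations of $\restr{\phi}{A}$ converge only in the $(A\subset M)$-topology, and there is no topology on $\mathcal{T}(\mathcal{H})$ or on the intermediate coefficient algebras compatible with the ultraweak topology on $M$ in which one could show the reassembled c.c.p.\ maps converge to $\pi$ on words in the creation operators. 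Acknowledging that this step ``demands the most care'' does not supply it; as written, the proof of the one nontrivial assertion of the lemma is missing.

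The paper avoids any approximation argument on $\mathcal{T}(\mathcal{H})$ altogether. By \cite{MR0512362}, weak nuclearity of a $*$-homomorphism into $B^{\sharp *}\subseteq \mathbb{B}(\mathcal{K})$ is equivalent to min-continuity of its product map with the commutant $(B^{\sharp *})'$. Weak nuclearity of $i_{B}\circ\restr{\phi}{A}$ gives a min-continuous extension $\rho$ of $\widetilde{\phi}_{A}\times\id_{(B^{\sharp *})'}$ to $A\otimes(B^{\sharp *})'$; one then checks, again using Lemma \ref{lem: p_nor invariant}, that $\rho$ together with $\widetilde{\tau}\big(\sum_{j}\xi_{j}\otimes b_{j}\big)=\sum_{j}i_{B}(\phi(\tau(\xi_{j})))b_{j}$ is a representation of the exterior tensor product $\mathcal{H}\otimes(B^{\sharp *})'$, so the universal property of $\mathcal{T}(\mathcal{H}\otimes(B^{\sharp *})')\simeq\mathcal{T}(\mathcal{H})\otimes(B^{\sharp *})'$ produces the product map $\pi\times\id_{(B^{\sharp *})'}$ as a min-continuous $*$-homomorphism with no further estimates. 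If you want to repair your write-up, replace the Fock-space compression sketch with this Effros--Lance reformulation: it reduces the weak nuclearity of $\pi$ to a second application of the same universal property you already used to construct it.
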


\begin{proof}
  Let $p_{\nor} \in B^{**}$ denote the support projection of the inclusion $A\subset M$. We naturally identify $B^{\sharp *} \simeq p_{\nor}B^{**}p_{\nor}$ as a von Neumann algebra, and $M\subset B^{\sharp *}$ as von Neumann subalgebra. Fix a Hilbert space $\mathcal{K}$ such that $B^{\sharp *} $ is normally and faithfully represented in $\mathbb{B}(\mathcal{K})$. Since the restriction map $\restr{\phi}{A}\colon A\to B$ is $(A\subset M)$-nuclear, by \cite[Lemma 4.4]{ding2023biexact}, the c.c.p.\ map $\widetilde{\phi}_{A}\coloneqq i_{B}\circ \restr{\phi}{A}\colon A\to B^{\sharp *} \subseteq \mathbb{B}(\mathcal{K})$ gives a weakly nuclear $*$-homomorphism. By \cite{MR0512362}, if we let $(B^{\sharp *})'$ denote the commutant of $B^{\sharp *}$ in $\mathbb{B}(\mathcal{K})$, then the product map
  \[\widetilde{\phi}_{A}\times \id_{(B^{\sharp *})'} \colon A\odot (B^{\sharp *})' \ni \sum_{k}a_{k}\otimes b_{k} \mapsto \sum_{k}\widetilde{\phi}_{A}(a_{k})b_{k} \in \mathbb{B}(\mathcal{K}), \quad a_{k}\in A,\, b_{k}\in  (B^{\sharp *})'\]
  is continuous with respect to the minimum tensor norm. Let $\rho$ be the continuous extension of $\widetilde{\phi}_{A}\times \id_{(B^{\sharp *})'}$ to $A\otimes (B^{\sharp *})'$.

  Let $\mathcal{H}\otimes (B^{\sharp *})'$ denote the exterior tensor product of $\mathcal{H}$ and $(B^{\sharp *})'$, and note that it is a C$^{*}$-correspondence over $A\otimes(B^{\sharp *})'$. Define the c.c.p.\ map $\widetilde{\phi}\coloneqq i_{B}\circ \phi\colon \mathcal{T}(\mathcal{H}) \to B^{\sharp *} \subseteq \mathbb{B}(\mathcal{K})$, and consider the map $\widetilde{\tau}$ given by
  \[\widetilde{\tau}\colon \mathcal{H}\odot (B^{\sharp *})'\ni \sum_{j}\xi_{j}\otimes b_{j}\mapsto \sum_{j} \widetilde{\phi}(\tau(\xi_{j})) b_{j} \in \mathbb{B}(\mathcal{K}).\]
  Since $\phi$ is a $*$-homomorphism, we notice that for any $\xi_{1},\cdots,\xi_{n}, \eta_{1},\cdots,\eta_{m}\in\mathcal{H}$ and $b_{1},\cdots,b_{n},c_{1},\cdots,c_{m} \in (B^{\sharp *})'$, by Lemma \ref{lem: p_nor invariant} and relation (\ref{eqn: rel for toep, 2}), 

  \begin{equation} \label{eqn: ext prod with comm}
  \begin{aligned}
    \bigg(\sum_{j=1}^{n}\widetilde{\phi}(\tau(\xi_{j})) b_{j}\bigg)^{*}\bigg(\sum_{k=1}^{m}\widetilde{\phi}(\tau(\eta_{k})) c_{k}\bigg) &= \bigg(\sum_{j=1}^{n}p_{\nor}\phi(\tau(\xi_{k}))p_{\nor} b_{k}\bigg)^{*}\bigg(\sum_{k=1}^{m}p_{\nor}\phi(\tau(\eta_{k}))p_{\nor} c_{k}\bigg)\\
    &= \sum_{j,k}p_{\nor}\phi((\tau(\xi_{j}))^{*}p_{\nor}\phi(\tau(\eta_{k}))p_{\nor} b_{j}^{*}c_{k}\\
    &= \sum_{j,k}p_{\nor}\phi((\tau(\xi_{j}))^{*}\phi(\tau(\eta_{k}))p_{\nor} b_{j}^{*}c_{k}\\
    &= \sum_{j,k}p_{\nor}\phi(\tau(\xi_{j})^{*}\tau(\eta_{k}))p_{\nor} b_{j}^{*}c_{k}\\
    &= \sum_{j,k}p_{\nor}\phi(\langle \xi_{j},\eta_{k}\rangle_{A})p_{\nor} b_{j}^{*}c_{k}\\
    &= \sum_{j,k}\widetilde{\phi}((\langle \xi_{j},\eta_{k}\rangle_{A}) b_{j}^{*}c_{k}\\
    &= \sum_{j,k}(\widetilde{\phi}_{A}\times \id)(\langle \xi_{j},\eta_{k}\rangle_{A} \otimes b_{j}^{*}c_{k})\\
    &= \rho\bigg(\left\langle \sum_{j=1}^{n} \xi_{j}\otimes b_{j}, \sum_{k=1}^{m} \eta_{k}\otimes c_{k}\right\rangle_{A\otimes (B^{\sharp *})'}\bigg)\,.
  \end{aligned}
  \end{equation}
  In particular, taking $m=n$, $\eta_{j}= \xi_{j}$, and $c_{j}= b_{j}$ for $1\leqslant j\leqslant n$ in equation (\ref{eqn: ext prod with comm}), we have the inequality
  \begin{equation*}
    \begin{aligned}
      \left\Vert \widetilde{\tau}(\sum_{j=1}^{n}\xi_{j}\otimes b_{j})\right\Vert^{2} &= \left\Vert \sum_{j=1}^{n}\widetilde{\phi}(\tau(\xi_{j})) b_{j}\right\Vert^{2} = \left\Vert \rho\bigg(\langle \sum_{j=1}^{n} \xi_{j}\otimes b_{j}, \sum_{j=1}^{n} \xi_{j}\otimes b_{j}\rangle_{A\otimes (B^{\sharp *}})'\bigg) \right\Vert \leqslant \left\Vert \sum_{j=1}^{n}\xi_{j} \otimes b_{j}\right\Vert^{2},
    \end{aligned}
  \end{equation*}
  so $\widetilde{\tau}$ extends to a continuous map on $\mathcal{H}\otimes (B^{\sharp *})'$, which we still denote by $\widetilde{\tau}$. Moreover, equation (\ref{eqn: ext prod with comm}) also shows that $(\widetilde{\phi}_{A}, \widetilde{\tau})$ satisfies relation (\ref{eqn: defn of rep of C* corresp, 2}) in the definition of a representation of C$^{*}$-correspondence. Since $\phi$ restricts to the identity map on $A$, using the relation (\ref{eqn: rel for toep, 1}) for $\tau$, we see that for $\xi \in \mathcal{H}$, $b\in (B^{\sharp *})'$, and simple tensors $a_{1}\otimes b_{1},a_{2}\otimes b_{2}\in A\odot (B^{\sharp *})'$,
  \begin{equation*}
    \begin{aligned}
      \widetilde{\tau}((a_{1}\otimes b_{1})(\xi\otimes b)(a_{2}\otimes b_{2})) &= \widetilde{\tau}((a_{1}\xi a_{2})\otimes (b_{1}bb_{2})) = \widetilde{\phi}(\tau(a_{1}\xi a_{2}))b_{1}bb_{2} = \widetilde{\phi}(a_{1}\tau(\xi )a_{2})b_{1}bb_{2}\\
      &= a_{1}b_{1}\widetilde{\phi}(\tau(\xi))b a_{2}b_{2} = \rho(a_{1}\otimes b_{1}) \widetilde{\tau}(\xi\otimes b)\rho(a_{2}\otimes b_{2}).
    \end{aligned}
  \end{equation*}

  Since the linear spans of simple tensors of the form $\xi\otimes b$ and $a\otimes b$ are norm dense in $\mathcal{H}\otimes (B^{\sharp *})'$ and $A\otimes (B^{\sharp *})'$, repsectively, and the maps $\rho$ and $\widetilde{\phi}$ are norm continuous, we conclude that the relation (\ref{eqn: defn of rep of C* corresp, 1}) also holds for $(\widetilde{\phi}_{A},\widetilde{\tau})$. Therefore, $(\widetilde{\phi}_{A},\widetilde{\tau})$ is indeed a representation of $\mathcal{H}\otimes (B^{\sharp *})'$ on $\mathbb{B}(\mathcal{K})$.

  By the universality of the Toeplitz-Pimsner algebras, the representation $(\rho,\widetilde{\tau})$ induces a $*$-homomorphism $\pi_{0}\colon \mathcal{T}(\mathcal{H}\otimes (B^{\sharp *})') \to \mathbb{B}(\mathcal{K})$. Since we can canonically identify $\mathcal{T}(\mathcal{H}\otimes (B^{\sharp *})')$ with $\mathcal{T}(\mathcal{H})\otimes (B^{\sharp *})'$, again by \cite{MR0512362}, $\pi_{0}$ restricts to a weakly nuclear $*$-homomorphism $\pi\colon \mathcal{T}(\mathcal{H}) \to (B^{\sharp *})'' = B^{\sharp *}$.
\end{proof}

\section{Weak exactness of amalgamated free product}

Let $M_{i}$, $i=1,2$, be $\sigma$-finite von Neumann algebras containing a copy of a unital von Neumann subalgebra $(B,\varphi)$ with a faithful normal state $\varphi$ on $B$. Assume that $M_{i}$ admits a faithful normal conditional expectation $E_{i}\colon M_{i}\to B$ for each $\in I$. Moreover, assume that each $M_{i}$ contains a unital ultraweakly dense C$^{*}$-algebra $A_{i}$ and a fixed unital C$^{*}$-subalgebra $D\subset A_{i}$ such that $E_{1}(A_{1}) = E_{2}(A_{2}) = D$. Let $M = M_{1}\overline{*}_{B} M_{2}$ be the amalgamated free product von Neumann algebra, $A = A_{1}*_{D}A_{2}$ be the (reduced) amalgamated free product C$^{*}$-algebra, $\varphi_{i} = \varphi\circ E_{i} \in (M_{i})_{*}$ for $i=1,2$, and $\widetilde{\varphi} = \varphi \circ E \in M_{*}$.

The following constructions are inspired by the arguments in \cite[Section 4.8]{MR2391387}. The normal u.c.p.\ map
\[\phi\colon M_{1}\oplus M_{2} \ni (m_{1},m_{2})\mapsto (E_{2}(m_{2}),E_{1}(m_{1})) \in B\oplus B \subset M_{1}\oplus M_{2}\]
restricts to a u.c.p.\ map $A_{1}\oplus A_{2} \ni (a_{1} \oplus a_{2})\mapsto (E_{2}(a_{2}) \oplus E_{1}(a_{1})) \in D\oplus D \subset A_{1}\oplus A_{2}$. We let $\mathcal{H} = \mathcal{H}^{\phi}_{A_{1}\oplus A_{2}}$ denote the Hilbert C$^{*}$-correspondence over $A_{1}\oplus A_{2}$ as given in Example \ref{exmp: correspondence}, so $\mathcal{H}$ is $(A_{1}\oplus A_{2}\subset M_{1}\oplus M_{2})$-normal.

For simplicity, we let $L^{2}B = L^{2}(B,\varphi)$, $L^{2}M_{i} = L^{2}(M_{i},\varphi_{i})$, $L^{2}M_{i}^{o} = L^{2}M_{i} \ominus L^{2}B$, $\xi_{i} \in L^{2}M_{i}$ be the implementing vector of $\varphi_{i}$, and $A_{i}^{o} = A_{i}\cap \ker(E_{i})$. We consider the Hilbert space $\mathcal{K} = \mathcal{K}_{1}\oplus \mathcal{K}_{2}$, where
\[\mathcal{K}_{i} = \bigoplus_{n\geqslant 1}\bigoplus_{i_{1}=i,i_{1}\neq\cdots\neq i_{n}}L^{2}M_{i_{1}}\otimes_{B}\cdots \otimes_{B} L^{2}M_{i_{n}}.\]
Let $T\in\mathbb{B}(\mathcal{K})$ be the isometry that exchanges $\mathcal{K}_{1}$ and $\mathcal{K}_{2}$ and acts like a creation operator:
\[T(\zeta_{1}\otimes \cdots \otimes \zeta_{n}) = \xi_{i}\otimes \zeta_{1}\otimes\cdots \otimes \zeta_{n},\]
where $i=1$ or $2$ is suitably chosen so that if $\zeta_{1}\in \mathcal{K}_{i_{1}}$ then $i \neq i_{1}$. The adjoint $T^{*}$ satisfies the property that $T^{*}(\zeta) = 0$ for $\zeta \in L^{2}M_{1}\cup L^{2}M_{2}$ and 
\[T^{*}(m\otimes \zeta_{2}\otimes\cdots \otimes\zeta_{n}) = E_{i}(m)\zeta_{2}\otimes\cdots \otimes\zeta_{n}\]
for $m\in M_{i}$ and $\zeta_{2}\otimes \cdots\otimes \zeta_{n} \in L^{2}M_{i_{2}} \otimes_{B}\cdots \otimes_{B} L^{2}M_{i_{n}}$ with $i\neq i_{2}\neq\cdots \neq i_{n}$.

Regarding each $M_{i}$ as a unital von Neumann subalgebra of $\mathbb{B}(\mathcal{K}_{i})$ acting on the first tensor component gives an ultraweakly continuous unital faithful $*$-representation $\iota\colon A_{1}\oplus A_{2}\to \mathbb{B}(\mathcal{K})$ that extends to a normal faithful representation on $M_{1}\oplus M_{2}$. Let $\tau\colon \mathcal{H} \to \mathbb{B}(\mathcal{K})$ be a linear map defined by 
\[\tau((a_{1}\oplus a_{2})\otimes (b_{1} \oplus b_{2})) = (a_{1}\oplus a_{2})T(b_{1}\oplus b_{2}), \quad (a_{1}\oplus a_{2})\otimes (b_{1} \oplus b_{2})\in \mathcal{H}.\]

\begin{lem}
  $(\iota, \tau)$ is a representation of the C$^{*}$-correspondence $\mathcal{H}$ on $\mathbb{B}(\mathcal{K})$.
\end{lem}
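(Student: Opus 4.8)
The plan is to verify the two defining relations (\ref{eqn: defn of rep of C* corresp, 1}) and (\ref{eqn: defn of rep of C* corresp, 2}) of a representation of a C$^{*}$-correspondence for the pair $(\iota,\tau)$. Since $\iota$ is a $*$-homomorphism and both relations depend (bi)linearly and continuously on the correspondence variable, it suffices to check them on simple tensors $(a_{1}\oplus a_{2})\otimes(b_{1}\oplus b_{2})$, whose linear span is dense in $\mathcal{H}=\mathcal{H}^{\phi}_{A_{1}\oplus A_{2}}$, and then to extend by linearity and continuity.

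The computational core of the argument is the identity
\[T^{*}\,\iota(x_{1}\oplus x_{2})\,T=\iota\big(\phi(x_{1}\oplus x_{2})\big)=\iota\big(E_{2}(x_{2})\oplus E_{1}(x_{1})\big),\qquad x_{1}\oplus x_{2}\in A_{1}\oplus A_{2}.\]
To prove it I would evaluate the left-hand side on a vector $v\in\mathcal{K}_{i_{1}}$. Since $T$ exchanges $\mathcal{K}_{1}$ and $\mathcal{K}_{2}$ by prepending the vector $\xi_{j}$ with $j\neq i_{1}$, we have $Tv=\xi_{j}\otimes v\in\mathcal{K}_{j}$; the operator $\iota(x_{1}\oplus x_{2})$ then acts on the first tensor leg by $x_{j}$, producing $(x_{j}\xi_{j})\otimes v$; and finally $T^{*}$ collapses the leading leg through the conditional expectation, so that $T^{*}\big((x_{j}\xi_{j})\otimes v\big)=E_{j}(x_{j})v$. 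Distinguishing the cases $i_{1}=1$ (so $j=2$) and $i_{1}=2$ (so $j=1$) shows that the resulting operator acts by $E_{2}(x_{2})$ on $\mathcal{K}_{1}$ and by $E_{1}(x_{1})$ on $\mathcal{K}_{2}$, which is exactly $\iota(\phi(x_{1}\oplus x_{2}))$.

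Granting this identity, relation (\ref{eqn: defn of rep of C* corresp, 2}) follows directly: for simple tensors $\zeta=(a_{1}\oplus a_{2})\otimes(b_{1}\oplus b_{2})$ and $\eta=(c_{1}\oplus c_{2})\otimes(d_{1}\oplus d_{2})$ one has
\[\tau(\zeta)^{*}\tau(\eta)=\iota(b_{1}\oplus b_{2})^{*}\,T^{*}\,\iota\big((a_{1}\oplus a_{2})^{*}(c_{1}\oplus c_{2})\big)\,T\,\iota(d_{1}\oplus d_{2}),\]
and inserting the core identity, followed by the definition of the $(A_{1}\oplus A_{2})$-valued inner product on $\mathcal{H}^{\phi}_{A_{1}\oplus A_{2}}$ from Example \ref{exmp: correspondence}, rewrites this as $\iota(\langle\zeta,\eta\rangle)$. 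Besides establishing (\ref{eqn: defn of rep of C* corresp, 2}), this computation shows that $\tau$ is well defined and isometric on simple tensors: since $\iota$ is faithful, $\|\tau(\zeta)\|^{2}=\|\iota(\langle\zeta,\zeta\rangle)\|=\|\zeta\|^{2}_{\mathcal{H}}$, so $\tau$ descends through the separation and completion defining $\mathcal{H}$ and extends to a bounded map on all of $\mathcal{H}$. Relation (\ref{eqn: defn of rep of C* corresp, 1}) is then immediate from the formula defining $\tau$ and the multiplicativity of $\iota$: for $a,b\in A_{1}\oplus A_{2}$ one has $\iota(a)\tau(\zeta)\iota(b)=\tau(a\cdot\zeta\cdot b)$, matching the left and right $A_{1}\oplus A_{2}$-actions $a\cdot(x\otimes y)\cdot b=ax\otimes yb$ on $\mathcal{H}$.

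The main obstacle is the index bookkeeping in the core identity. One must track carefully with which index a reduced word begins, so that $T^{*}$ annihilates precisely the leg freshly created by $T$ and returns the correct conditional expectation, and one must use that $\iota(x_{1}\oplus x_{2})$ acts by $x_{2}$ (not $x_{1}$) on $\mathcal{K}_{2}$, the summand into which $T$ maps $\mathcal{K}_{1}$. It is exactly this crossing that converts the diagonal action $\iota$ into the index-swapping map $\phi$; everything else reduces to routine density and continuity arguments.
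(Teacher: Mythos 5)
Your proof is correct and follows essentially the same route as the paper: both hinge on the identity $T^{*}\iota(x_{1}\oplus x_{2})T=\iota(E_{2}(x_{2})\oplus E_{1}(x_{1}))$, deduce relation (\ref{eqn: defn of rep of C* corresp, 2}) and hence well-definedness and boundedness of $\tau$ from the diagonal case, and observe that (\ref{eqn: defn of rep of C* corresp, 1}) is immediate from the definition of $\tau$. The only difference is cosmetic: you verify the core identity by an explicit vector computation and work on simple tensors before extending by bilinearity, whereas the paper states the identity without proof and writes out the computation directly for finite sums.
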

\begin{proof}
  Notice that we have
  \[T^{*}(a_{1}\oplus a_{2})T = E_{2}(a_{2})\oplus E_{1}(a_{1}), \qquad (a_{1}\oplus a_{2}) \in A_{1}\oplus A_{2},\]
  so for $a_{1,i},b_{1,i},c_{1,j},d_{1,j}\in A_{1}$ and $a_{2,i},b_{2,i},c_{2,j},d_{2,j}\in A_{2}$ for $1\leqslant i \leqslant n$ and $1\leqslant j \leqslant m$,
  \begin{equation} \label{eqn: rep tau well-defn}
    \begin{aligned}
      &\tau\bigg(\sum_{i=1}^{n}(a_{1,i}\oplus a_{2,i})\otimes(b_{1,i}\oplus b_{2,i})\bigg)^{*}\tau\bigg(\sum_{j=1}^{m}(c_{1,j}\oplus c_{2,j})\otimes(d_{1,j}\oplus d_{2,j})\bigg)\\
      =& \sum_{i,j} (b_{1,i}^{*}\oplus b_{2,i}^{*})T^{*}(a_{1,i}^{*}\oplus a_{2,i}^{*})(c_{1,j}\oplus c_{2,j})T(d_{1,j}\oplus d_{2,j})\\
      =& \sum_{i,j} (b_{1,i}^{*}\oplus b_{2,i}^{*}) (E_{2}(a_{2,i}^{*}c_{2,j})\oplus E_{1}(a_{1,i}^{*}c_{1,j}))(d_{1,j}\oplus d_{2,j})\\
      =& \sum_{i,j} (b_{1,i}^{*}E_{2}(a_{2,i}c_{2,j})d_{1,j})\oplus  (b_{2,i}^{*}E_{2}(a_{1,i}c_{1,j})d_{2,j})\\
      =& \sum_{i,j}\iota(\langle (a_{1,i}\oplus a_{2,i})\otimes(b_{1,i}\oplus b_{2,i}), (c_{1,j}\oplus c_{2,j})\otimes(d_{1,j}\oplus d_{2,j})\rangle_{A_{1}\oplus A_{2}})\\
      =& \iota\bigg(\left\langle \sum_{i=1}^{n}(a_{1,i}\oplus a_{2,i})\otimes(b_{1,i}\oplus b_{2,i}), \sum_{j=1}^{m}(c_{1,j}\oplus c_{2,j})\otimes(d_{1,j}\oplus d_{2,j})\right\rangle_{A_{1}\oplus A_{2}}\bigg).
    \end{aligned}
  \end{equation}
  In particular, taking $n=m$, $a_{i,j}=c_{i,j}$, and $b_{i,j}=d_{i,j}$ for $i=1,2$ and $1\leqslant j\leqslant n$, we have
  \[\left\Vert \tau\bigg(\sum_{i=1}^{n}(a_{1,i}\oplus a_{2,i})\otimes(b_{1,i}\oplus b_{2,i})\bigg)\right\Vert^{2} = \left\Vert \sum_{i=1}^{n}(a_{1,i}\oplus a_{2,i})\otimes(b_{1,i}\oplus b_{2,i})\right\Vert^{2},\]
  so $\tau$ is well-defined and continuous. Relation (\ref{eqn: defn of rep of C* corresp, 1}) clearly holds by definition of $\tau$, and (\ref{eqn: defn of rep of C* corresp, 2}) follows from (\ref{eqn: rep tau well-defn}) above. Therefore, $(\iota, \tau)$ is a representation of $\mathcal{H}$ on $\mathbb{B}(\mathcal{K})$.
\end{proof}

One can easily check that the gauge-invariant uniqueness theorem applies to the representation $(\iota,\tau)$, so the C$^{*}$-algebra $\mathcal{T}(\mathcal{H})$ is canonically $*$-isomorphic to the C$^{*}$-subalgebra $C^{*}(A_{1}\oplus A_{2},T)$ of $\mathbb{B}(\mathcal{K})$.

We define $p \coloneqq 1- T^{2}(T^{*})^{2}\in C^{*}(A_{1}\oplus A_{2},T)$, which is the orthogonal projection onto
\[\bigoplus_{i\neq j} \bigg(L^{2}M_{i} \oplus (L^{2}M_{i}\otimes_{B} L^{2}M_{j}) \oplus \big(\big((L^{2}M_{i}\otimes_{B} L^{2}M_{j}) \ominus (B\xi_{i} \otimes_{B} B\xi_{j})\big)\otimes_{B} \mathcal{K}_{i}\big)\bigg).\]

For $i,j=1,2$, we define following subspaces of $p\mathcal{K}$:
\begin{equation*}
  \begin{aligned}
    \mathcal{K}_{1,1} &= L^{2}M_{1} \oplus \bigoplus L^{2}M_{1}\otimes_{B} L^{2}M_{2}^{o} \otimes_{B} \cdots \otimes_{B} L^{2}M_{2}^{o} \otimes_{B} L^{2}M_{1}\\
    \mathcal{K}_{1,2} &= (L^{2}M_{1}\otimes_{B} L^{2}M_{2}) \oplus \bigoplus L^{2}M_{1}\otimes_{B} L^{2}M_{2}^{o} \otimes_{B} \cdots \otimes_{B} L^{2}M_{1}^{o} \otimes_{B} L^{2}M_{2}\\
    \mathcal{K}_{2,1} &= (L^{2}M_{2}\otimes_{B} L^{2}M_{1}) \oplus \bigoplus L^{2}M_{2}\otimes_{B} L^{2}M_{1}^{o} \otimes_{B} \cdots \otimes_{B} L^{2}M_{2}^{o} \otimes_{B} L^{2}M_{1}\\
    \mathcal{K}_{2,2} &= L^{2}M_{2} \oplus \bigoplus L^{2}M_{2}\otimes_{B} L^{2}M_{1}^{o} \otimes_{B} \cdots \otimes_{B} L^{2}M_{1}^{o} \otimes_{B} L^{2}M_{2}
  \end{aligned}
\end{equation*}
Namely, $\mathcal{K}_{i,j}$ is the subspace consisting of the tensors beginning with vectors from $L^{2}M_{i}$, ending with those from $L^{2}M_{j}$, and in the middle vectors are from the orthogonal complement of $B\xi_{1}, B\xi_{2}$, if there are any. Via the standard identifications $\xi_{1}B \otimes_{B} L^{2}M_{2}^{o} \simeq L^{2}M_{2}^{o}$ and $L^{2}M_{2}^{o} \otimes_{B} \xi_{1}B \simeq L^{2}M_{2}^{o}$, we have a natural isomorphism $V_{1,1}\colon \mathcal{K}_{1,1}\to L^{2}M$. Similarly, by the identification $\xi_{2}B \otimes_{B} L^{2}M_{1}^{o} \simeq L^{2}M_{1}^{o}$ and $L^{2}M_{1}^{o} \otimes_{B} \xi_{2}B \simeq L^{2}M_{2}^{o}$, we have natural identification maps $V_{i,j}\colon \mathcal{K}_{i,j}\to L^{2}M = L^{2}(M,\widetilde{\varphi})$ for $i,j=1,2$.

Let $u = p(T+ T^{*})p \in C^{*}(A,T)$. As seen in \cite[Section 4.8]{MR2391387}, $u$ is a self-adjoint partial isometry with $u^{2} = p$ that interchanges $p\mathcal{K}_{1}$ and $p\mathcal{K}_{2}$, via the canonical identifications among subspaces of $p\mathcal{K}$ as indicated in the following diagram:
\begin{equation*} 
  \begin{tikzcd}
    L^{2}M_{1} \arrow[dr, leftrightarrow] & \xi_{1}B\otimes_{B}L^{2}M_{2}\arrow[dl, crossing over,leftrightarrow]&  & L^{2}M_{1}^{o}\otimes_{B}L^{2}M_{2} \arrow[dr,leftrightarrow] & \xi_{1}B\otimes_{B}L^{2}M_{2}^{o}\otimes_{B} L^{2}M_{1}\arrow[dl, crossing over,leftrightarrow] & \cdots\\
    L^{2}M_{2}  & \xi_{2}B\otimes_{B}L^{2}M_{1}&  & L^{2}M_{2}^{o}\otimes_{B}L^{2}M_{1}  & \xi_{2}B\otimes_{B}L^{2}M_{1}^{o}\otimes_{B} L^{2}M_{2} & \cdots
\end{tikzcd}
\end{equation*}
The two-sided arrows in the above diagram represent canonical identification maps. In particular, from above diagram we see that $u$ interchanges $\mathcal{K}_{1,1}$ with $\mathcal{K}_{2,1}$, and $\mathcal{K}_{1,2}$ with $\mathcal{K}_{2,2}$ via canonical identifications, i.e., for any $\xi \in L^{2}M$, 
\begin{equation} \label{graph: u action}
  u(V_{1,1}^{*}\xi) = V_{2,1}^{*}\xi\,, \quad u(V_{2,1}^{*}\xi) = V_{1,2}^{*}\xi\,,\quad u(V_{1,2}^{*}\xi) = V_{2,2}^{*}\xi\,,\quad u(V_{2,2}^{*}\xi) = V_{1,2}^{*}\xi.
\end{equation}

We define a u.c.p.\ map $\Phi\colon \mathbb{B}(\mathcal{K})\to \mathbb{B}(L^{2}M)$ by $\Phi(x) = \frac{1}{4}\sum_{i,j,k,l=1}^{2} V_{i,j}xV_{k,l}^{*}$, so that the support projection of $\Phi$ is no larger than the orthgonal projection from $\mathcal{K}$ onto $\bigoplus_{i,j=1,2}\mathcal{K}_{i,j}$. 
Therefore, $p\geqslant \supp(\Phi)$ and thus $\Phi(p) = 1$. We also have 
\begin{equation*}
	\Phi((m_{1}\oplus m_{2})) = \frac{1}{2}\lambda_{1}(m_{1})+\frac{1}{2}\lambda_{2}(m_{2}) \in M\,, \qquad (m_{1}\oplus m_{2})\in M_{1}\oplus M_{2},
\end{equation*}
and in particular when we restrict to $A_{1}\oplus A_{2}$,
\begin{equation} \label{eqn: Phi(a_1 oplus a_2)}
	\Phi((a_{1}\oplus a_{2})) = \frac{1}{2}\lambda_{1}(a_{1})+\frac{1}{2}\lambda_{2}(a_{2}) \in A\,, \qquad (a_{1}\oplus a_{2})\in A_{1}\oplus A_{2}.
\end{equation}
Therefore, $\Phi$ is a normal u.c.p.\ map that maps $A_{1}\oplus A_{2}$ into $A$, and hence by Lemma \ref{lem: ext cont weak top}, $\Phi$ extends to a normal u.c.p.\ map 
\[\Phi^{\sharp *}\colon (\mathbb{B}(\mathcal{K})^{(A_{1}\oplus A_{2})\sharp (A_{1}\oplus A_{2})})^{*} \to (\mathbb{B}(L^{2}M)^{A\sharp A})^{*}\]
satisfying
\[i_{\mathbb{B}(L^{2}M)} \circ \Phi = \Phi^{\sharp *} \circ i_{\mathbb{B}(\mathcal{K})},\]
where $i_{\mathbb{B}(\mathcal{K})}\colon \mathbb{B}(\mathcal{K})\to  (\mathbb{B}(\mathcal{K})^{(A_{1}\oplus A_{2})\sharp (A_{1}\oplus A_{2})})^{*}$ and $i_{\mathbb{B}(L^{2}M)}\colon \mathbb{B}(L^{2}M)\to  (\mathbb{B}(L^{2}M)^{A\sharp A})^{*}$ are the canonical inclusion maps. Hence, if we let $p_{\nor} \in \mathbb{B}(\mathcal{K})^{**}$ denote the projection corresponding to the identity representation $A_{1}\oplus A_{2}\subset M_{1}\oplus M_{2}$ and $q_{\nor} \in \mathbb{B}(L^{2}M)^{**}$ denote the projection corresponding to the identity representation $A\subset M$, then $\Phi^{\sharp *}(p_{\nor}) = \Phi^{\sharp *} \circ i_{\mathbb{B}(\mathcal{K})}(1) = i_{\mathbb{B}(L^{2}M)} \circ \Phi(1) = q_{\nor}$. 

Since the partial isometry $u$ interchanges $\mathcal{K}_{1,1}$ with $\mathcal{K}_{2,1}$, and $\mathcal{K}_{1,2}$ with $\mathcal{K}_{2,2}$ via canonical identifications, by (\ref{graph: u action}) we see that for any $\xi \in L^{2}M$, 
\[\Phi(u)(\xi) = \frac{1}{4}\sum_{i,j,=1}^{2} V_{i,j}\bigg(\sum_{k,l=1}^{2}uV_{k,l}^{*}\xi\bigg) = \frac{1}{4}\sum_{i,j=1}^{2} V_{i,j}\sum_{k,l=1}^{2}V_{k,l}^{*}\xi = \xi.\]

Since $\Phi$ maps $p$ to the identity $1\in \mathbb{B}(L^{2}M)$, we have 
\begin{equation} \label{eqn: Phi(u)=1}
\Phi(T+T^{*}) = \Phi(p(T+T^{*})p) = \Phi(u) = 1.
\end{equation} 
It also follows from (\ref{graph: u action}) that $\Phi(u(m_{1}\oplus m_{2})u) = \frac{1}{2}\lambda_{1}(m_{1})+\frac{1}{2}\lambda_{2}(m_{2})$ for $(m_{1}\oplus m_{2}) \in M_{1}\oplus M_{2}$.

For $i=1,2$, we define u.c.p.\ maps $\psi_{i}\colon A_{i}\to pC^{*}(A_{1}\oplus A_{2},T)p$ by
\[\psi_{i}(a) = pap+ uau, \quad a\in A_{i}.\]
It is straightforward to see that $\psi_{1}(b) = \psi_{2}(b)$ for $b\in B$. By \cite[Theorem 4.8.2]{MR2391387}, there exists a u.c.p.\ map $\Psi\colon A \to pC^{*}(A_{1}\oplus A_{2},T)p\subset \mathcal{T}(\mathcal{H})$ such that $\Psi(b) = \psi_{1}(b) = \psi_{2}(b)$ for $b\in B$ and
\[\Psi(a_{1}\cdots a_{n}) = \psi_{i_{1}}(a_{1})\cdots \psi_{i_{n}}(a_{n})= \Psi(a_{1})\cdots \Psi(a_{n})\]
for $a_{k}\in A^{o}_{i_{k}}$ with $i_{1}\neq \cdots \neq i_{k}$. Moreover, for every $a\in A_{i}$, 
\[\Phi\circ \Psi(a) = \Phi(pap+uau) = \Phi(pap) + \Phi(uau) = \frac{1}{2}\lambda_{i}(a) + \frac{1}{2}\lambda_{i}(a) = \lambda_{i}(a).\]
Therefore, the u.c.p.\ map $\Phi\circ \Psi$ is the canonical inclusion map when restricted to $A_{1}\cup A_{2}$. Since $A_{1}$ and $A_{2}$ generate $A = A_{1}*_{D}A_{2}$ as C$^{*}$-algebra, $\Phi\circ \Psi\colon A\to \mathbb{B}(L^{2}M)$ is in fact the canonical inclusion map.

Since $A_{1}\oplus A_{2}$ is weakly exact in $M_{1}\oplus M_{2}$, by \cite[Theorem 5.1]{ding2023biexact}, the canonical inclusion $A_{1}\oplus A_{2}\subset \mathbb{B}(\mathcal{K})$ is $(A_{1}\oplus A_{2}\subset M_{1}\oplus M_{2})$-nuclear. Applying Lemma \ref{lem: normal hom ext hom on toep} to the canonical inclusion $\mathcal{T}(\mathcal{H}) \subset \mathbb{B}(\mathcal{K})$ we obtain a weakly nuclear $*$-homomorphism $\pi\colon \mathcal{T}(\mathcal{H})\to (\mathbb{B}(\mathcal{K})^{(A_{1}\oplus A_{2})\sharp (A_{1}\oplus A_{2})})^{*}$ such that $\pi(x) = i_{\mathbb{B}(\mathcal{K})}(x)$ for all $x\in A\cup \tau(\mathcal{H})\cup \tau(\mathcal{H})^{*}$.

Combining all the maps derived above, we obtain the following diagram, which is commutative only in solid arrows:

\begin{equation*}
\begin{tikzcd}[column sep=small]
& &(\mathbb{B}(\mathcal{K})^{(A_{1}\oplus A_{2})\sharp (A_{1}\oplus A_{2})})^{*} \arrow[rr, "\Phi^{\sharp *}"]   & & (\mathbb{B}(L^{2}M)^{A\sharp A})^{*} \\
A \arrow[r, "\Psi"] \arrow[rrrr, bend right=17, "\subset", "\text{canonical inclusion}" swap] &\mathcal{T}(\mathcal{H}) \arrow[rr, hook, "\subset", "\text{canonical inclusion}" swap] \arrow[ur, dashed, "\pi"]& & \mathbb{B}(\mathcal{K}) \arrow[r, "\Phi"]  \arrow[ul, "i_{\mathbb{B}(\mathcal{K})}", labels=above right] & \mathbb{B}(L^{2}M) \arrow[u, "i_{\mathbb{B}(L^{2}M)}", labels=right]
\end{tikzcd}
\end{equation*}

Although the above diagram may no longer be commutative if we include the dahsed arrow $\pi$, we will show in (the proof of) the next theorem that we do have
\[\Phi^{\sharp *}\circ \pi \circ \Psi = i_{\mathbb{B}(L^{2}M)} \circ \Phi \circ \Psi.\]

\begin{thm} \label{thm: amalg free prod wkly exact subalg}
  For $i=1,2$, let $(M_{i}, \varphi_{i})$ be a von Neumann algebra with a faithful normal state $\varphi_{i}$, each containing a copy of a fixed von Neumann subalgebra $B$ and admitting a conditional expectation $E_{i}\colon M_{i}\to B$. Suppose each $M_{i}$ contains an ultraweakly dense unital C$^{*}$-algebra $A_{i}$, and each $A_{i}$ contains a copy of a fixed C$^{*}$-subalgebra $D$ such that $E_{i}(A_{i}) = D$ for $i=1,2$. 

  If $A_{i}$ is weakly exact in $M_{i}$ for $i=1,2$, then $A= A_{1}*_{D} A_{2}$ is weakly exact in $M=M_{1}*_{B} M_{2}$.
\end{thm}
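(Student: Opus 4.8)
The plan is to use the characterization of weak exactness from \cite[Theorem 5.1]{ding2023biexact}: $A$ is weakly exact in $M$ precisely when the canonical inclusion $\iota_{A}\colon A\hookrightarrow\mathbb{B}(L^{2}M)$ is $(A\subset M)$-nuclear. By \cite[Lemma 4.4]{ding2023biexact}, the latter is equivalent to weak nuclearity of the composite $i_{\mathbb{B}(L^{2}M)}\circ\iota_{A}\colon A\to(\mathbb{B}(L^{2}M)^{A\sharp A})^{*}$. So the whole problem reduces to producing a weakly nuclear factorization of $i_{\mathbb{B}(L^{2}M)}\circ\iota_{A}$ through matrix algebras, and I will read it off the map $\Phi^{\sharp *}\circ\pi\circ\Psi$ from the diagram above.

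First I would record that $\Phi^{\sharp *}\circ\pi\circ\Psi$ is weakly nuclear. Since $A_{1}\oplus A_{2}$ is weakly exact in $M_{1}\oplus M_{2}$, Lemma \ref{lem: normal hom ext hom on toep} furnishes the weakly nuclear $*$-homomorphism $\pi$; write it as a point-ultraweak limit $\pi=\lim_{\lambda}\beta_{\lambda}\circ\alpha_{\lambda}$ of c.c.p.\ maps $\alpha_{\lambda}\colon\mathcal{T}(\mathcal{H})\to\mathbb{M}_{n(\lambda)}(\mathbb{C})$ and $\beta_{\lambda}\colon\mathbb{M}_{n(\lambda)}(\mathbb{C})\to(\mathbb{B}(\mathcal{K})^{(A_{1}\oplus A_{2})\sharp(A_{1}\oplus A_{2})})^{*}$. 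As $\Psi$ is a fixed u.c.p.\ map and $\Phi^{\sharp *}$ is normal, the c.c.p.\ maps $\alpha_{\lambda}\circ\Psi\colon A\to\mathbb{M}_{n(\lambda)}(\mathbb{C})$ and $\Phi^{\sharp *}\circ\beta_{\lambda}\colon\mathbb{M}_{n(\lambda)}(\mathbb{C})\to(\mathbb{B}(L^{2}M)^{A\sharp A})^{*}$ satisfy $(\Phi^{\sharp *}\circ\beta_{\lambda})\circ(\alpha_{\lambda}\circ\Psi)\to\Phi^{\sharp *}\circ\pi\circ\Psi$ point-ultraweakly, using the ultraweak continuity of $\Phi^{\sharp *}$. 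Hence $\Phi^{\sharp *}\circ\pi\circ\Psi$ is weakly nuclear.

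It remains to prove the identity $\Phi^{\sharp *}\circ\pi\circ\Psi=i_{\mathbb{B}(L^{2}M)}\circ\Phi\circ\Psi$ announced above (equivalently $=i_{\mathbb{B}(L^{2}M)}\circ\iota_{A}$, since $\Phi\circ\Psi=\iota_{A}$), which is the crux. As $\Phi^{\sharp *}\circ i_{\mathbb{B}(\mathcal{K})}=i_{\mathbb{B}(L^{2}M)}\circ\Phi$, it suffices to show that $\Phi^{\sharp *}\circ\pi$ and $\Phi^{\sharp *}\circ i_{\mathbb{B}(\mathcal{K})}$ agree on $\Psi(A)$, and I would check this on the reduced words $a_{1}\cdots a_{n}$ with $a_{k}\in A_{i_{k}}^{o}$, $i_{1}\neq\cdots\neq i_{n}$, on which $\Psi(a_{1}\cdots a_{n})=\Psi(a_{1})\cdots\Psi(a_{n})=:w$. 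Recall that $\pi$ agrees with $i_{\mathbb{B}(\mathcal{K})}$ on the generators $A_{1}\oplus A_{2}$, $\tau(\mathcal{H})$ and $\tau(\mathcal{H})^{*}$; since $\pi$ is a $*$-homomorphism and, by Lemma \ref{lem: p_nor invariant}, $\tau(\xi)p_{\nor}=p_{\nor}\tau(\xi)p_{\nor}$ for every $\xi$, each $p_{\nor}$ inserted by $\pi$ can be moved across creation operators at no cost, so that $\pi(w)$ and $i_{\mathbb{B}(\mathcal{K})}(w)=p_{\nor}\,w\,p_{\nor}$ differ only through ``defect'' terms in which a factor $(1-p_{\nor})$ sits between a creation operator and a later annihilation operator. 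The substance of the argument is then to show that $\Phi^{\sharp *}$ annihilates every such defect term; this is where the explicit geometry of $\Phi$ enters, namely that $\Phi$ is supported on $\bigoplus_{i,j}\mathcal{K}_{i,j}$ and averages through the identifications $V_{i,j}$, together with $\Phi^{\sharp *}(p_{\nor})=q_{\nor}$ and $\Phi(T+T^{*})=1$. Granting this, $\Phi^{\sharp *}(\pi(w))=\Phi^{\sharp *}(i_{\mathbb{B}(\mathcal{K})}(w))=i_{\mathbb{B}(L^{2}M)}(\Phi(w))=i_{\mathbb{B}(L^{2}M)}(\lambda_{i_{1}}(a_{1})\cdots\lambda_{i_{n}}(a_{n}))=i_{\mathbb{B}(L^{2}M)}(\iota_{A}(a_{1}\cdots a_{n}))$, as required.

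Combining the two previous paragraphs, $i_{\mathbb{B}(L^{2}M)}\circ\iota_{A}$ is weakly nuclear, whence $\iota_{A}$ is $(A\subset M)$-nuclear by \cite[Lemma 4.4]{ding2023biexact}, and therefore $A$ is weakly exact in $M$ by \cite[Theorem 5.1]{ding2023biexact}. I expect the main obstacle to be the identity of the third paragraph. The difficulty is structural: $\pi$ is multiplicative but lands in the corner $p_{\nor}\mathbb{B}(\mathcal{K})^{**}p_{\nor}$, where the order isomorphism $i_{\mathbb{B}(\mathcal{K})}$ fails to be multiplicative, so $\pi$ and $i_{\mathbb{B}(\mathcal{K})}$ genuinely disagree on products; the whole point is that the averaging map $\Phi^{\sharp *}$ is engineered to cancel exactly the resulting non-normal defects. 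Making the bookkeeping of these defects precise, and giving a clean description of $\ker\Phi^{\sharp *}$ in terms of the subspaces $\mathcal{K}_{i,j}$, is the delicate technical heart of the proof.
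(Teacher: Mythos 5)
Your overall scaffolding is the same as the paper's: reduce everything to the identity $\Phi^{\sharp *}\circ \pi \circ \Psi = i_{\mathbb{B}(L^{2}M)} \circ \Phi \circ \Psi$ on $A$, note that the left-hand side is weakly nuclear because $\pi$ is weakly nuclear and $\Phi^{\sharp *}$ is normal, and then invoke \cite[Lemma 4.4]{ding2023biexact} and \cite[Theorem 5.1]{ding2023biexact}. Those parts are correct. But the identity itself is the entire content of the theorem, and your third paragraph does not prove it: you reduce it to the claim that $\Phi^{\sharp *}$ annihilates every ``defect term'' in which a factor $1-p_{\nor}$ sits between a creation and a later annihilation operator inside $\pi(w)$, and then write ``Granting this\ldots''. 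That claim is not innocuous. The individual defect terms are not positive elements of $p_{\nor}\mathbb{B}(\mathcal{K})^{**}p_{\nor}$ once the word $w$ has length greater than two, so there is no direct positivity argument showing that $\Phi^{\sharp *}$ kills them one at a time, and a clean description of $\ker \Phi^{\sharp *}$ in terms of the subspaces $\mathcal{K}_{i,j}$ is not something the paper ever needs or produces. As written, the crux of the proof is an acknowledged gap.

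The paper closes this gap without any word-by-word bookkeeping, using multiplicative domains. First, the positivity sandwich
$0\leqslant \Phi^{\sharp *}(\pi(T)^{2}\pi(T^{*})^{2}) = \Phi^{\sharp *}(p_{\nor}T^{2}p_{\nor}(T^{*})^{2}p_{\nor}) \leqslant \Phi^{\sharp *}(p_{\nor}T^{2}(T^{*})^{2}p_{\nor}) = i_{\mathbb{B}(L^{2}M)}(\Phi(T^{2}(T^{*})^{2})) = 0$
(valid because this particular defect \emph{is} controlled by positivity, via Lemma \ref{lem: p_nor invariant}) gives $\Phi^{\sharp *}(\pi(p)) = q_{\nor}$, the unit of the target; hence $\pi(p)$ lies in the multiplicative domain of $\Phi^{\sharp *}$. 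Similarly $\Phi^{\sharp *}(\pi(u)) = q_{\nor}$ from $\Phi(T+T^{*})=1$, so $\pi(u)$ lies in the multiplicative domain as well. It follows that for $a\in A_{i}$, $\Phi^{\sharp *}(\pi(pap+uau)) = 2\Phi^{\sharp *}(\pi(a)) = i_{\mathbb{B}(L^{2}M)}(\lambda_{i}(a))$, i.e.\ the u.c.p.\ map $\Phi^{\sharp *}\circ\pi\circ\Psi$ agrees with the $*$-homomorphism $i_{\mathbb{B}(L^{2}M)}\circ\Phi\circ\Psi$ on $A_{1}\cup A_{2}$; this forces each $a\in A_{i}$ into the multiplicative domain of $\Phi^{\sharp *}\circ\pi\circ\Psi$, and since $A_{1}\cup A_{2}$ generates $A$, the two maps agree on all of $A$. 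Your reduced words $a_{1}\cdots a_{n}$ never need to be touched. If you want to salvage your approach, replace the defect-cancellation plan with this multiplicative-domain argument; the only genuinely analytic inputs are $\Phi(p)=1$, $\Phi(T+T^{*})=1$, and Lemma \ref{lem: p_nor invariant}.
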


\begin{proof}
  Keeping the same notations as in the constructions preceding the theorem, we claim that it suffices to show 
  \begin{equation} \label{eqn: thm comm diagram}
  	\Phi^{\sharp *}\circ \pi \circ \Psi = i_{\mathbb{B}(L^{2}M)} \circ \Phi \circ \Psi 
  \end{equation}
  on $A$. Indeed, if this is true, then $i_{\mathbb{B}(L^{2}M)} \circ \Phi \circ \Psi $ is a weakly nuclear u.c.p from $A$ into $(\mathbb{B}(L^{2}M)^{A\sharp A})^{*}$ because $\pi$ is weakly nuclear and $\Phi^{\sharp *}$ is normal. Then by \cite[Lemma 4.4]{ding2023biexact}, the u.c.p.\ map $\Phi \circ \Psi\colon A\to  \mathbb{B}(L^{2}M)$, which is just the canonical inclusion map, is $(A \subset M)$-nuclear. By \cite[Theorem 5.1]{ding2023biexact}, it follows that $A$ is weakly exact in $M$.

  To show equation (\ref{eqn: thm comm diagram}), we first recall that $\Phi(1- T^{2}(T^{*})^{2}) = \Phi(p) = 1$, so $\Phi(T^{2}(T^{*})^{2}) = 0$. Applying Lemma \ref{lem: p_nor invariant} to $\mathcal{T}(\mathcal{H}) = C^{*}(A_{1}\oplus A_{2},T)$, we see that
  \begin{equation*}
  	\begin{aligned}
  		0\leqslant \Phi^{\sharp *}(\pi(T^{2}(T^{*})^{2})) &= \Phi^{\sharp *}(\pi(T)^{2}\pi(T^{*})^{2})\\
  		&= \Phi^{\sharp *}((p_{\nor}T p_{\nor})^{2}(p_{\nor}T^{*}p_{\nor})^{2})\\
  		&= \Phi^{\sharp *}(p_{\nor}T^{2}p_{\nor} (T^{*})^{2}p_{\nor})\\
  		&\leqslant \Phi^{\sharp *}(p_{\nor}T^{2} (T^{*})^{2}p_{\nor})\\
  		&= \Phi^{\sharp *}(i_{\mathbb{B}(\mathcal{K})}(T^{2}(T^{*})^{2}))\\
  		&= i_{\mathbb{B}(L^{2}M)}(\Phi(T^{2}(T^{*})^{2}))\\
  		&= 0,
  	\end{aligned}
  \end{equation*}
  so $\Phi^{\sharp *}(\pi(T^{2}(T^{*})^{2})) = 0$. Thus we have
  \begin{equation} \label{eqn: Phi(phi(p)) = q_nor}
    \Phi^{\sharp *}(\pi(p)) = \Phi^{\sharp *}(\pi (1-T^2 (T^{*})^{2})) = \Phi^{\sharp *}(\pi(1)) -  \Phi^{\sharp *}(\pi(T^{2}(T^{*})^{2})) = \Phi^{\sharp *}(\pi(1)) = q_{\nor}.
  \end{equation}
	In particular, $\pi(p)$ is in the multiplicative domain of $\Phi^{\sharp *}$. Also, by (\ref{eqn: Phi(u)=1})
  \begin{equation} \label{eqn: Phi(pi(u)) = q_nor}
  	\begin{aligned}
    	\Phi^{\sharp *}(\pi (u)) &= \Phi^{\sharp *}(\pi(p(T+T^{*})p)) = \Phi^{\sharp *}(\pi(p)\pi(T+T^{*})\pi(p)) = \Phi^{\sharp *}(\pi(T+T^{*})) \\
    	&= \Phi^{\sharp *}(i_{\mathbb{B}(\mathcal{K})}(T+T^{*})) = i_{\mathbb{B}(L^{2}M)}\circ \Phi(T+ T^{*}) = q_{\nor},
  	\end{aligned}
  \end{equation}
  so $\pi(u)$ also lies in the multiplicative domain of $\Phi^{\sharp *}$.

  Since $\Phi\circ \Psi\colon A\to \mathbb{B}(L^{2}M)$ is the canonical inclusion map, $i_{\mathbb{B}(L^{2}M)} \circ \Phi \circ \Psi \colon A\to (\mathbb{B}(L^{2}M)^{A\sharp A})^{*}$ is a $*$-homomorphism and for $i=1,2$ and any $a\in A_{i}$, we have
  \[(i_{\mathbb{B}(L^{2}M)} \circ \Phi \circ \Psi)(a) = i_{\mathbb{B}(L^{2}M)} \circ\lambda_{i}(a).\]
  On the other hand, for $a\in A_{i}$, by (\ref{eqn: Phi(phi(p)) = q_nor}), (\ref{eqn: Phi(pi(u)) = q_nor}), and (\ref{eqn: Phi(a_1 oplus a_2)}),
  \begin{equation*} 
  	\begin{aligned}
    	(\Phi^{\sharp *}\circ \pi\circ \Psi)(a) &= \Phi^{\sharp *}(\pi(pap+uau)) 
    	= 2 \Phi^{\sharp *}(\pi(a))
    	=  2 \Phi^{\sharp *}(i_{\mathbb{B}(\mathcal{K})}(a))
    	= 2 i_{\mathbb{B}(L^{2}M)}(\Phi(a))\\
    	&= 2 i_{\mathbb{B}(L^{2}M)}(\frac{1}{2}\lambda_{i}(a))
    	= i_{\mathbb{B}(L^{2}M)}(\lambda_{i}(a))
    	= (i_{\mathbb{B}(L^{2}M)} \circ \Phi \circ \Psi)(a).
    \end{aligned}
  \end{equation*}
  In particular, $a$ is in the multiplicative domain of $\Phi^{\sharp *}\circ \pi\circ \Psi$. Since $A_{1}$ and $A_{2}$ generate $A = A_{1}*_{D}A_{2}$ as C$^{*}$-algebra, we conclude that equation (\ref{eqn: thm comm diagram}) holds on $A$.
\end{proof}

\begin{proof}[Proof of Theorem \ref{thm: amalg free prod preserves wk exact}]
  When $I = \{1,2\}$, the corollary is a direct consequence of Theorem \ref{thm: amalg free prod wkly exact subalg} and Theorem \cite[Theorem 1.0.1]{MR3004955} applied to $A_{1} = M_{1}, A_{2} = M_{2}$, and $D= B$. The case when $I$ is a finite set follows from induction and the fact that taking amalgamated free product is associative. 

  For the general case, it suffices to notice that by \cite[Proposition 4.12]{MR3004955} and \cite[Corollary 4.9, 5.6]{ding2023biexact}, increasing unions of weakly exact von Neumann algebras are again weakly exact.
\end{proof}

\begin{rem}
	The constructions in Section 3 can be easily adapted to recover the result in \cite{MR2039095} that an amalagamated free product of exact C$^{*}$ algebras is exact, which is similar to the proof given in \cite[Section 4.8]{MR2391387}. Indeed, if $A_{1}$ and $A_{2}$ are exact C$^{*}$-algebras containing a fixed unital C$^{*}$-subalgebra $D$ and admit faithful (more generally, nondegenerate) conditional expectations $E_{i}\colon A_{i}\to D$ for $i=1,2$, then we let $\mathcal{K}$ be the C$^{*}$-correspondence over $A_{1}\oplus A_{2}$ given by
\[\mathcal{K} = \bigoplus_{n\geqslant 1}\bigoplus_{i_{1}\neq \cdots \neq i_{n}} L^{2}A_{i_{1}} \otimes_{D} \cdots \otimes_{D} L^{2}A_{i_{1}}\]
and $\mathcal{F} = *_{D}(L^{2}A_{i})_{i=1,2}$ be the free product Hilbert $D$-module (see Section 2.1). We regard $\mathbb{B}(\mathcal{K})$ and $\mathbb{B}(\mathcal{F})$ as $\mathbb{C}$-C$^{*}$-algebras and consider the $\mathbb{C}$-topology, namely the norm topology, instead of $(A\subset M)$-topologies. Then by the same calculations we derive the following commutative diagram

\begin{equation*}
\begin{tikzcd}[column sep=small]
& &\mathbb{B}(\mathcal{K})^{**} \arrow[rr, "\Phi^{\sharp *}"]   & & \mathbb{B}(\mathcal{F})^{**} \\
A=A_{1}*_{D}A_{2} \arrow[r, "\Psi"] \arrow[rrrr, bend right=17, "\subset", "\text{canonical inclusion}" swap] &\mathcal{T}(\mathcal{H}) \arrow[rr, hook, "\subset", "\text{canonical inclusion}" swap] \arrow[ur,  "\pi"]& & \mathbb{B}(\mathcal{K}) \arrow[r, "\Phi"]  \arrow[ul, "i_{\mathbb{B}(\mathcal{K})}", labels=above right] & \mathbb{B}(\mathcal{F}) \arrow[u, "i_{\mathbb{B}(\mathcal{F})}", labels=right]
\end{tikzcd}
\end{equation*}

with $\pi$ still a weakly nuclear $*$-homomorphism. Therefore, the inclusion $A_{1}*_{D}A_{2}\subset \mathbb{B}(\mathcal{F})$ is nuclear, so the amalgamated free product $A_{1}*_{D}A_{2}$ is exact.

\end{rem}

We recall the HNN extensions of von Neumann algebras in \cite{MR2152505} and \cite{MR2546003}. Let $M$ be a $\sigma$-finite von Neumann algebra and $B\subset M$ be a $\sigma$-finite von Neumann subalgebra with a faithful normal conditional expectation $E_{B}^{M}\colon M\to B$. Let $\theta\colon B\to M$ be a faithful normal unital $*$-homomorphism and assume there exists a faithful normal conditional expectation $E_{\theta(B)}^{M}\colon N\to \theta(B)$. The \textit{HNN extension of $M$ by $\theta$ with respect to $E^{M}_{B}, E^{M}_{\theta(B)}$} is the unique triple $(N, E^{N}_{M}, u)$ consisting of a von Neumann algebra $N$ containing $M$, a faithful normal conditional expectation $E^{N}_{M}\colon N\to M$, and a unitary $u = u(\theta)$ in $N$ such that $u\, \theta(b) u^{*} = b$ for every $b\in B$ and $E^{N}_{M}(w) = 0$ for every reduced word $w$ in $ M$ and $u$. Here if we let $B_{1} = B$ and $B_{-1} = \theta(B)$, then a word $w = x_{0}u^{\varepsilon_{1}}x_{1}\cdots x_{n-1}u^{\varepsilon_{n}}x_{n}$ with $x_{i}\in M$ and $\varepsilon_{i}\in \{-1,1\}$ is called \textit{reduced} if $x_{i} \in M\ominus B_{\varepsilon_{i}}$ whenever $\varepsilon_{i}\neq \varepsilon_{i+1}$. 

\begin{proof}[Proof of Corollary \ref{cor: wk exact for HNN}]
  We consider $B\oplus B$ as a von Neumann subalgebra of $M\otimes \mathbb{M}_{2}(\mathbb{C})$ via the inclusion 
\[(b_{1}\oplus b_{2}) \mapsto \begin{bmatrix} b_{1} & 0 \\ 0 & \theta(b_{2})\end{bmatrix},\]
so then there exists a faithful normal conditional expectation $E_{\theta}\colon M\otimes \mathbb{M}_{2}(\mathbb{C}) \to B\oplus B$ given by
\[E_{\theta} = \begin{bmatrix} E^{M}_{B} & 0 \\ 0 & E^{M}_{\theta(B)}\end{bmatrix}.\]

Similarly, we also embed $B\oplus B$ into $B\otimes \mathbb{M}_{2}(\mathbb{C})$ via 
\[(b_{1}\oplus b_{2}) \mapsto \begin{bmatrix} b_{1} & 0 \\ 0 & b_{2}\end{bmatrix}\]
with a faithful normal conditional expectation $E_{1}\colon B\otimes \mathbb{M}_{2}(\mathbb{C}) \to B\oplus B$ defined by
\[E_{1} = \begin{bmatrix} \id_{B} & 0 \\ 0 & \id_{B}\end{bmatrix}.\]

Consider the amalgamated free product $(\widetilde{N}, E) = (M\otimes \mathbb{M}_{2}(\mathbb{C}),E_{\theta})\overline{*}_{B\oplus B}(B\otimes \mathbb{M}_{2}(\mathbb{C}), E_{1})$. Since $M$ and $B$ are weakly exact, by Theorem \ref{thm: amalg free prod preserves wk exact}, $\widetilde{N}$ is also weakly exact.

Let $N$ denote the HNN extension of $M$ by $\theta$. By \cite[Proposition 3.1]{MR2546003}, there exists a projection $p\in \widetilde{N}$ such that $p\widetilde{N}p \simeq N$. Since $\widetilde{N}$ is weakly exact, we conclude that $N$ is weakly exact as well.
\end{proof}

\bibliographystyle{amsalpha}
\bibliography{ref}
\nocite{MR1426840}

\end{document}